\setlist[enumerate,1]{label={$(\roman*)$},leftmargin=*}
\newtheorem{thm}{Theorem}[section]
\newtheorem{lem}[thm]{Lemma}
\newtheorem{prop}[thm]{Proposition}
\theoremstyle{definition}
\newtheorem{defn}[thm]{Definition}
\newtheorem*{exmp}{Example}
\newtheorem{cor}[thm]{Corollary}
\newtheorem{remark}[thm]{Remark}
\newtheorem{remarks}[thm]{Remarks}
\newtheorem*{question*}{Question}
\newtheorem{question}{Question}
\numberwithin{thm}{section}
\numberwithin{equation}{section}
\newcommand{\dotifempty} [1]{\ifthenelse{\isempty{#1}}
                          	{\cdot}%
                          	{#1}}
\DeclarePairedDelimiter\floor{\lfloor}{\rfloor}
\DeclarePairedDelimiter\ceiling{\lceil}{\rceil}
\DeclarePairedDelimiter\braces{\{}{\}}
\newcommand{\sog}[1]{\left( #1 \right)}
\newcommand{\map}[0]{\rightarrow}
\renewcommand{\restriction}[2]{{
  \left.\kern-\nulldelimiterspace 
  #1 
  \vphantom{\big|} 
  \right|_{#2} 
  }}
\newcommand{\twomat}[4]{\begin{pmatrix}#1 & #2 \\ #3 & #4 \end{pmatrix}}
\DeclareMathOperator{\trace}{tr}
\newcommand{\fld}[1]{
						\ifthenelse{\isempty{#1}}
                          	{\mathbb{Q}_p}
                          	{#1}
                         } 
\renewcommand{\O}[1]{
						\ifthenelse{\isempty{#1}}
                          	{\mathcal{O}}
                          	{\mathcal{O}}
                         } 
\newcommand{\oneOkMesh}[1]{
						\ifthenelse{\isempty{#1}}
                          	{\mathcal{O}_{\fld{K}}}%
                          	{T^{#1}\cdot\mathcal{O}_{\fld{K}}}
                         }
\newcommand{\oneOk}[2]{
						\ifthenelse{\isempty{#1}}
                          	{\textbf{1}_{\oneOkMesh{#2}}}%
                          	{\textbf{1}_{#1 +\oneOkMesh{#2}}}
                         }
\newcommand{\oneZpMesh}[1]{
						\ifthenelse{\isempty{#1}}
                          	{\fld{Z}_p}%
                          	{p^{#1}\cdot\fld{Z}_p}
                         }
\newcommand{\oneZp}[2]{
						\ifthenelse{\isempty{#1}}
                          	{\textbf{1}_{\oneZpMesh{#2}}}%
                          	{\textbf{1}_{#1 +\oneZpMesh{#2}}}
                         }
\newcommand{\eps}[0]{\epsilon}
\begin{document}
\title{On Ribet's lemma for $\mathrm{GL}_2$ modulo prime powers}
\author{Amit Ophir and Ariel Weiss}

\address{Amit Ophir, \newline
Department of Mathematics, University of California San Diego, La Jolla, CA, USA
\newline Einstein Institute of Mathematics, The Hebrew University of Jerusalem, Jerusalem, Israel.\vspace*{-3pt}}
\email{aophir@ucsd.edu}
\address{Ariel Weiss,\newline Department of Mathematics, The Ohio State University, Columbus, OH, USA\newline Department of Mathematics, Ben-Gurion University of the Negev, Be'er Sheva, Israel.\vspace*{-6pt}}
\email{weiss.742@osu.edu}
\subjclass[2020]{20G25 (20E08, 20C11, 11G05, 11S23)}
\keywords{Bruhat--Tits trees, representations over discretely valued fields, Ribet's Lemma}

\begin{abstract}

 Let $\rho\:G\to \GL_2(K)$ be a continuous representation of a compact group $G$ over a complete discretely valued field $K$ with ring of integers $\O{K}$ and uniformiser $\pi$. We prove that $\trace\rho$ is reducible modulo $\pi^n$ if and only if $\rho$ is reducible modulo $\pi^n$. More precisely, there exist characters $\chi_1,\chi_2\:G\to(\O{K}/\pi^n\O{K})\t$ such that $\det(t - \rho(g))\equiv (t-\chi_1(g))(t-\chi_2(g))\pmod{\pi^n}$ for all $g\in  G$, if and only if there exists a $G$-stable lattice $\Lambda\sub K^2$ such that $\Lambda/\pi^n\Lambda$ contains a $G$-invariant, free, rank one $\O{K}/\pi^n\O{K}$-submodule. Our result applies in the case that $\rho$ is not residually multiplicity free, in which case it answers a question of Bella\"iche--Chenevier \cite{Bellaiche-arbres}*{Question, pp.\ 524}. As an application, we prove an optimal version of Ribet's Lemma, which gives a condition for the existence of a $G$-stable lattice $\Lambda$ that realises a non-split extension of $\chi_2$ by $\chi_1$.

\end{abstract}

\maketitle

\section{Introduction}

Let $\O{K}$ be a complete discrete valuation ring with fraction field $K$, uniformiser $\pi$, discrete valuation $v_\pi$ normalised such that $v_\pi(\pi)=1$ and residue field $\F$. Let $G$ be a compact group and let $\rho\:G\to \GL_2(K)$ be a continuous representation. Suppose that there exists an integer $n$ and continuous characters $\chi_1, \chi_2\:G\to (\O{K}/\pi^n\O{K})\t$ such that, for all $g\in G$, we have
\begin{equation}\label{eq:det}
P_{\rho(g)}(t):=\det(t - \rho(g)) \equiv (t-\chi_1(g))(t-\chi_2(g))\pmod{\pi^n},    
\end{equation}
where $P_{\rho(g)}(t)$ is the characteristic polynomial of $\rho(g)$.
The goal of this paper is to answer the following questions, the first of which is equivalent to \cite{Bellaiche-arbres}*{Question, pp. 524}:

\begin{question}\label{question1}
Is $\rho$ reducible modulo $\pi^n$, i.e.\ does there exist a $G$-stable lattice $\Lambda\sub K^2$ such that $\Lambda/\pi^n\Lambda$ contains a $G$-invariant, free, rank one $\O{K}/\pi^n\O{K}$-submodule $V$?
\end{question}

 Equivalently, does there exist a basis for $K^2$ with respect to which the image of $\rho$ is a subgroup of 
\[\Gamma_0(\pi^n) := \set{\begin{pmatrix}a&b\\c&d\end{pmatrix}\in \GL_2(\O{K}) : c\equiv 0\pmod{\pi^n}}?\]

\begin{question}\label{question2}
Can we choose $\Lambda$ so that the rank one submodule $V$ is isomorphic to $\chi_1$? Moreover, if $\rho$ is irreducible, can we choose $\Lambda$ so that $\Lambda/\pi^n\Lambda$ is a non-split extension of $\chi_2$ by $\chi_1$?
\end{question}

\Cref{question1} has previously been studied by Katz \cite{KatzAbelian1980} in the context of Galois representations attached to elliptic curves, and answered in the case that $K$ is a finite extension of $\Qp$ and $\chi_1$ is the trivial character \cite{KatzAbelian1980}*{Thm.\ 1}, though his proof can be generalised to any $\chi_1,\chi_2$. Our proof is completely different and works in the more general case of discretely valued fields. Moreover, our argument only requires $G$ to be a semigroup.

\subsection{The Bruhat--Tits tree of \texorpdfstring{$\PGL_2(K)$}{PGL(2, K)}}

In order to answer these questions, we rephrase them in the language of Bruhat--Tits trees.

By a lattice in $K^2$ we mean a rank two $\O{K}$-module $\Lambda\sub K^2$ that spans $K^2$ as a vector space. We say that two lattices $\Lambda_1, \Lambda_2$ are homothetic if there is an element $a\in K$ such that $\Lambda_1 = a\Lambda_2$.
\begin{defn}\label{def:bt-tree}
    The \emph{Bruhat--Tits tree} $\X$ of $\PGL_2(\fld{K})$ is the graph whose vertices are the homothety classes of lattices in $\fld{K}^2$. Two vertices $x, y\in \X$ are joined by an edge if we can choose representatives $\Lambda_x,\Lambda_y$ of $x, y$ such that
\[\pi\Lambda_x\subsetneq \Lambda_y\subsetneq \Lambda_x.\]
\end{defn}
Equivalently, $x, y\in \X$ are neighbours if there exists a basis $(v_1, v_2)$ of $\Lambda_x$ such that the homothety class of the lattice with basis $(v_1, \pi v_2)$ is $y$. After fixing such a basis, the other neighbours of $\Lambda_x$ are the lattices with bases $(\pi v_1, v_2 + i v_1)$ where $i\in \O{K}$ runs over a set of representatives for the congruence classes of $\O{K}$ modulo $\pi$.
Thus, if $q$ denotes the (possibly infinite) cardinality of $\F$, then $\X$ is a $(q+1)$-regular tree.

By extension, suppose that $x, y\in \X$ are two vertices of $\X$ of distance $d = d(x, y)$ from each other, where $d(x,y)$ is the number of edges in the path connecting $x$ to $y$. Then $d$ is the smallest integer for which we can choose representatives $\Lambda_x,\Lambda_y$ such that
\[\pi^d\Lambda_x\subsetneq \Lambda_y\subsetneq \Lambda_x.\]
Equivalently, $d$ is the unique integer for which there exists a basis $(v_1, v_2)$ of $\Lambda_x$ such that $(v_1, \pi^d v_2)$ is a basis of $\Lambda_y$.

The representation $\rho$ induces an action of $G$ on the vertices of $\X$. Let $\X(\rho)$ be the subgraph of $\X$ whose vertices are homothety classes of $G$-stable lattices. Then, for each $x\in\X(\rho)$ with representative $\Lambda_x$, there is a bijection between vertices $y\in\X(\rho)$ with $d(x, y) = n$ and free, rank one, $G$-invariant $\O{K}/\pi^n\O{K}$-submodules of $\Lambda_x/\pi^n\Lambda_x$ (\Cref{lem:distance-submod-bijection}). Thus, an affirmative answer to \Cref{question1} follows from the following theorem, which is our main result:
\begin{thm}\label{thm:main-intro}
The following are equivalent:
        \begin{enumerate}
        \item There exist two vertices $x,y\in \X(\rho)$ with $d(x,y)=n$.
                
        \item There exists a pair of characters $\chi_1, \chi_2\:G\to(\O{K}/\pi^n\O{K})\t$ such that, for all $g\in G$, $P_{\rho(g)}(t)\equiv (t- \chi_1(g))(t- \chi_2(g))\pmod{\pi^n}$.
        \end{enumerate}
\end{thm}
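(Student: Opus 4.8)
The plan is to prove the two implications separately; $(i)\Rightarrow(ii)$ is elementary, and $(ii)\Rightarrow(i)$ is the substance. For $(i)\Rightarrow(ii)$: given $x,y\in\X(\rho)$ with $d(x,y)=n$, I would pick representatives $\Lambda_x,\Lambda_y$ and a basis $(v_1,v_2)$ of $\Lambda_x$ such that $(v_1,\pi^n v_2)$ is a basis of $\Lambda_y$. In this basis, $G$-stability of $\Lambda_x$ puts $\rho(G)$ in $\GL_2(\O{K})$ and $G$-stability of $\Lambda_y$ forces the lower-left entry of every $\rho(g)$ into $\pi^n\O{K}$, so $\rho(G)\subseteq\Gamma_0(\pi^n)$; reducing modulo $\pi^n$ makes $\rho$ upper triangular, and its diagonal entries are continuous characters $\chi_1,\chi_2$ with $\trace\rho\equiv\chi_1+\chi_2\pmod{\pi^n}$.

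For $(ii)\Rightarrow(i)$ I would first make two reductions. By compactness of $G$, $\rho(G)$ lies in a conjugate of $\GL_2(\O{K})$, so $\X(\rho)\ne\varnothing$; fix a basis with $\rho(G)\subseteq\GL_2(\O{K})$. If $\rho$ is reducible over $K$, then $\X(\rho)$ is unbounded — for a $G$-stable line $L\subseteq K^2$ and a $G$-stable lattice $\Lambda$ with $\Lambda\cap L=\O{K}w$, the lattices $\Lambda+\pi^{-k}\O{K}w$ are $G$-stable and pairwise at distance $k$ — so, $\X(\rho)$ being a connected subtree of $\X$, it contains geodesics of every length and $(i)$ holds for all $n$; hence I may assume $\rho$ is irreducible over $K$. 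Since $\X(\rho)$ is a subtree, it now suffices to produce $x,y\in\X(\rho)$ with $d(x,y)\ge n$, which by \Cref{lem:distance-submod-bijection} is the same as conjugating $\rho$ into $\Gamma_0(\pi^n)$.

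For that last step the plan is a Ribet-type argument carried out modulo $\pi^n$, and this is where the assumption $\operatorname{char}\F\ne2$ enters essentially. Since the residue characteristic is not $2$, the identity $\det\rho(g)=\tfrac12\bigl(\trace\rho(g)^2-\trace\rho(g^2)\bigr)$ upgrades the hypothesis to $\det\rho\equiv\chi_1\chi_2\pmod{\pi^n}$, and Cayley--Hamilton then gives $\bigl(\rho(g)-\chi_1(g)\bigr)\bigl(\rho(g)-\chi_2(g)\bigr)\equiv0\pmod{\pi^n}$ for all $g$. I would then study the $\O{K}$-order $B=\O{K}[\rho(G)]\subseteq M_2(\O{K})$, for which $B\otimes_{\O{K}}K=M_2(K)$ by Burnside since $\rho$ is irreducible. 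When $\bar\chi_1\ne\bar\chi_2$, the decomposition $\trace\rho\equiv\chi_1+\chi_2$ is unique modulo $\pi^n$ (here one again uses $\operatorname{char}\F\ne2$, to pin the two diagonal values down pointwise), $B/\pi B$ contains a rank-one idempotent, and this lifts to an idempotent of $B$ because $B$ is $\pi$-adically complete; conjugating the lift to $\operatorname{diag}(1,0)$ and taking the Peirce decomposition exhibits $B$ as a standard order $\left(\begin{smallmatrix}\O{K}&\mathfrak b\\\mathfrak c&\O{K}\end{smallmatrix}\right)$ with $\mathfrak b,\mathfrak c$ nonzero fractional ideals of $\O{K}$ whose product $\mathfrak b\mathfrak c$ is the reducibility ideal of $\trace\rho$; thus $\mathfrak b\mathfrak c\subseteq\pi^n\O{K}$, and after rescaling the basis so that $\mathfrak b$ becomes the unit ideal one gets $\mathfrak c\subseteq\pi^n\O{K}$, i.e.\ $\rho(G)\subseteq\Gamma_0(\pi^n)$. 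This construction should also pick out the rank-one submodule isomorphic to $\chi_1$, answering \Cref{question2}, and identify $\operatorname{diam}\X(\rho)$ with $v_\pi(\mathfrak b\mathfrak c)$.

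The main obstacle is the residually scalar case $\bar\chi_1=\bar\chi_2$: there is then no rank-one idempotent to lift, the trace decomposition need not be unique, and the structure theory of multiplicity-free pseudocharacters is unavailable. I expect this case to require a separate, hands-on argument — for instance, analysing $\X(\rho)$ directly in a neighbourhood of a geodesic realising its diameter, or reducing to the previous case after an auxiliary twist of $\rho$ — and to account for the bulk of the work.
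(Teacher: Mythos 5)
Your proof of $(i)\Rightarrow(ii)$ is correct and essentially the same as the paper's (via \Cref{lem:distance-submod-bijection}). For $(ii)\Rightarrow(i)$, however, there is a genuine gap, and it is exactly the gap you yourself flag: in the last paragraph you concede that when $\bar\chi_1=\bar\chi_2$ you have no argument, only two vague suggestions (``analyse $\X(\rho)$ near a geodesic'' or ``auxiliary twist''). That case is not a corner to be swept up later; it is the \emph{entire content} of the theorem. The multiplicity-free case you handle by idempotent-lifting and the Peirce/GMA structure was already settled by Bella\"iche--Chenevier and Chenevier, and the paper explicitly poses its Question only for the non-multiplicity-free situation. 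In the residually scalar case $B/\pi B$ has no nontrivial idempotent to lift, the two diagonal characters cannot be disentangled pointwise, and your whole strategy has no foothold. So what you have is a correct re-derivation of a known result plus an honest admission that the new part is missing.

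The paper's actual proof avoids the case split entirely and is worth absorbing. The key observation (\Cref{lem_extending_to_linear_span}) is that the factorisation of characteristic polynomials $P_{\rho(g)}\equiv(t-\chi_1(g))(t-\chi_2(g))\pmod{\pi^n}$ extends $\O{K}$-linearly to $\Span_{\O{K}}\rho(G)$, with the ``eigenvalues'' of $a\rho(g)+b\rho(h)$ being $a\chi_i(g)+b\chi_i(h)$; this uses the quadratic polarisation identity $\det(A+B)-\det A-\det B=\trace A\cdot\trace B-\trace(AB)$ and hence $\operatorname{char}\F\ne2$. One then picks $g_0$ minimising $v_\pi(\chi_1(g_0)-\chi_2(g_0))$. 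If $\X(\{\rho(g_0)\})=\X(\rho)$, a single-matrix argument (\Cref{cor_d=n_for_a_single_A}) finishes. Otherwise one finds a boundary point $x\in\X(\rho)$ with a neighbour $y$ outside $\X(\rho)$ and an element $g_1$ with $y\notin\X(\{\rho(g_1)\})$, forms the algebra element $f=\rho(g_1)-a\rho(g_0)$ with $a$ chosen so that $f$ has a \emph{repeated} root $\delta$ modulo $\pi^n$, and uses \Cref{lem_existence_of_basis} to represent $f$ as $\left(\begin{smallmatrix}\delta&1\\0&\delta\end{smallmatrix}\right)$ on $\Lambda_x/\pi^n\Lambda_x$. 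Computing $\trace(f\rho(g))$ in two ways then kills the lower-left entry of every $\rho(g)$, conjugating $\rho$ into $\Gamma_0(\pi^n)$ in one stroke — no idempotent required, and no hypothesis that $\bar\chi_1\ne\bar\chi_2$. That single construction is the idea your proposal is missing.
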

Questions \ref{question1} and \ref{question2} have been answered in far greater generality in the case that $\rho$ is residually multiplicity-free, i.e.\ if $\chi_1\not\equiv\chi_2\pmod\pi$ \cites{Bellaiche-apropos,Bellaiche-Graftieaux,Bellaiche-book, Chenevier-determinants}. In the residually multiplicity-free case, the graph $\X(\rho)$ is just a finite line segment, and the theory is simplified considerably by the fact that the characters $\chi_1, \chi_2$, if they exist, are unique. 

However, the non-residually multiplicity-free case is far more mysterious. This case has been studied by Bella\"iche--Chenevier \cite{Bellaiche-arbres}, who classify the types of graphs $\X(\rho)$ can be, prove that $(i)$ implies $(ii)$ in \Cref{thm:main-intro} and give a partial result in the opposite direction \cite{Bellaiche-arbres}*{Thm.\ 45}. They pose as a question whether $(i)$ and $(ii)$ are equivalent \cite{Bellaiche-arbres}*{Question, pp.\ 524}. As well as answering this question, we complete Bella\"iche--Chenevier's description of the shape of $\X(\rho)$: we describe the shape of $\X(\rho)$ in terms of certain pseudocharacter invariants of $\rho$ (\Cref{thm:classification1}). Conversely, we also show how to compute these invariants from the shape of $\X(\rho)$ (\Cref{thm_classification}).

\subsection{Ribet's Lemma}

In his celebrated 1976 paper \cite{ribet1976modular}, Ribet pioneered a technique to construct non-split extensions of Galois representations using congruences between modular forms. Suppose that $\rho\:\Ga\Q\to \GL_2(K)$ is an irreducible Galois representation that is residually reducible: there exist characters $\chi_1, \chi_2\:\Ga\Q\to \F\t$ such that $P_{\rho(g)}(t)\equiv (t- \chi_1(g))(t- \chi_2(g))\pmod{\pi}$. Ribet showed that there exists a $\Ga\Q$-stable lattice $\Lambda$ such that $\Lambda/\pi\Lambda$ is a non-split extension of $\chi_2$ by $\chi_1$. 

Ribet's Lemma has been generalised to higher-dimensional representations, to mod $\pi^n$ congruences, and to representations over more general rings \cites{Urban-ribet-lemma, Bellaiche-apropos,Bellaiche-Graftieaux,Brown, Bellaiche-book}. These generalisations are crucial components of proofs of Iwasawa main conjectures and cases of the Bloch--Kato conjecture \cites{Wiles_Iwasawa, Urban,Bellaiche-book, Skinner-Urban}. However, these generalisations all assume that $\rho$ is residually multiplicity-free. 

The non-residually multiplicity-free case is, again, more mysterious. Indeed, the na\"ive generalisation of Ribet's lemma is false: in \Cref{sec:counterexample}, we give examples of representations $\rho\:G\to \GL_2(K)$ with $P_{\rho(g)}(t)\equiv (t- \chi_1(g))(t- \chi_2(g))\pmod{\pi^n}$, but such that there is no $G$-stable lattice $\Lambda\sub K^2$ for which either of $\chi_1, \chi_2$ is a submodule of $\Lambda/\pi^n\Lambda$. Note that, if $\rho$ is irreducible, then \Cref{thm:main-intro} shows that there do exist pairs of characters $\eta_1, \eta_2$ and $G$-stable lattices $\Lambda$ such that $\Lambda/\pi^n\Lambda$ is a non-split extension of $\eta_2$ by $\eta_1$. The problem is that, unlike in the multiplicity-free case, the decomposition of $P_{\rho(g)}\pmod{\pi^n}$ as a product of characteristic polynomials of characters is not unique, and only some of the decompositions can actually be realised by lattices.  

Our first application of \Cref{thm:main-intro} is the following generalisation of Ribet's Lemma, which is optimal, in the sense that the integer $s$ is as large as possible in general:

\begin{thm}\label{thm:ribet-intro}
    Let $\rho\:G\to\GL_2(K)$ be an irreducible representation and let $\chi_1,\chi_2\:G\to (\O{K}/\pi^n\O{K})\t$ be characters such that, for all $g\in G$, the characteristic polynomial $P_{\rho(g)}(t)$ of $\rho(g)$ factors as  $(t- \chi_1(g))(t- \chi_2(g))\pmod{\pi^n}$. Let $m=m(\chi_1, \chi_2)$ be the largest integer such that $\chi_1\equiv \chi_2\pmod{\pi^m}$. Define
    \[s = s(\chi_1, \chi_2) = \begin{cases}n-m(\chi_1, \chi_2) &\text{if }m(\chi_1,\chi_2)<\frac n2\\ \ceiling*{\frac n2} &\text{if } m(\chi_1,\chi_2)\ge\frac n2.\end{cases}\]
    Then there exists a $G$-stable lattice $\Lambda$ such that $\Lambda/\pi^s\Lambda$ is a residually non-split extension of $\chi_1\pmod{\pi^s}$ by $\chi_2\pmod{\pi^s}$.
\end{thm}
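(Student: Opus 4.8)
The plan is to realise $\Lambda$ at the end of a longest geodesic in $\X(\rho)$. Since $\rho$ is irreducible, $\X(\rho)$ is a finite tree: a $G$-fixed end of $\X$ would be a $G$-stable line in $K^2$, which is excluded (this finiteness is also part of Bella\"iche--Chenevier's description of $\X(\rho)$, cf.\ \cite{Bellaiche-arbres}). By \Cref{thm:main-intro} applied to the given characters, $\X(\rho)$ contains a geodesic of length $n$, so its diameter $D$ is at least $n$. Fix a geodesic $z_0-z_1-\dots-z_D$ realising $D$ together with a basis $(e_1,e_2)$ of $K^2$ such that $\O{K}e_1\oplus\pi^i\O{K}e_2$ represents $z_i$ for each $i$. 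Writing $\rho(g)=\begin{pmatrix} a(g)&b(g)\\c(g)&d(g)\end{pmatrix}$ in this basis, stability of the lattice representing $z_D$ forces $c\equiv 0\pmod{\pi^D}$; hence $\rho\bmod\pi^n$ is upper triangular and $\psi_1:=a\bmod\pi^n$, $\psi_2:=d\bmod\pi^n$ are characters $G\to(\O{K}/\pi^n\O{K})\t$. Because $2$ is a unit modulo $\pi$, the Cayley--Hamilton identity $\det\rho(g)=\tfrac12\big(\trace\rho(g)^2-\trace\rho(g^2)\big)$ forces $\det\rho\equiv\chi_1\chi_2\pmod{\pi^n}$, and since $\rho\bmod\pi^n$ is triangular it also equals $\psi_1\psi_2$; together with $\psi_1+\psi_2=\trace\rho\equiv\chi_1+\chi_2\pmod{\pi^n}$ we obtain $\psi_1+\psi_2\equiv\chi_1+\chi_2$ and $\psi_1\psi_2\equiv\chi_1\chi_2$ modulo $\pi^n$.

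The technical core is a rigidity lemma: \emph{if $\chi_1,\chi_2,\psi_1,\psi_2\:G\to(\O{K}/\pi^n\O{K})\t$ are characters with $\psi_1+\psi_2=\chi_1+\chi_2$ and $\psi_1\psi_2=\chi_1\chi_2$, then, after interchanging $\psi_1$ and $\psi_2$ if necessary, $\psi_1\equiv\chi_1$ and $\psi_2\equiv\chi_2\pmod{\pi^s}$}. I would prove it by passing to $\mu=\chi_2\chi_1^{-1}$ and $\eta=\psi_1\chi_1^{-1}$: the hypotheses become the pointwise identity $(\eta(g)-1)(\eta(g)-\mu(g))=0$ in $\O{K}/\pi^n\O{K}$, and $m$ is the largest integer with $\mu\equiv 1\pmod{\pi^m}$. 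For $m=0$ the discriminant $(\chi_1-\chi_2)^2$ is a unit, and Hensel's lemma gives $\{\psi_1,\psi_2\}=\{\chi_1,\chi_2\}$ already modulo $\pi^n=\pi^s$. For $m\ge n/2$, so that $\mu\equiv 1\pmod{\pi^s}$, a direct estimate on $v_\pi(\eta(g)-1)+v_\pi(\eta(g)-\mu(g))\ge n$ gives $v_\pi(\eta(g)-1)\ge\lceil n/2\rceil=s$ for all $g$, hence $\psi_1\equiv\chi_1\equiv\chi_2\pmod{\pi^s}$. The range $1\le m<n/2$ (where $s=n-m$) is the heart of the matter: analysing for each $g$ the valuations of $\eta(g)-1$, $\mu(g)-1$ and $\eta(g)-\mu(g)$, I would show that every $g$ lies in $\ker(\eta\bmod\pi^s)$ or in $\ker(\eta\mu^{-1}\bmod\pi^s)$, and then use that a group is never the union of two proper subgroups to conclude that one of these two kernels is all of $G$ --- i.e.\ $\psi_1\equiv\chi_1$ or $\psi_1\equiv\chi_2$ modulo $\pi^s$. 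The subtle point is the degenerate case where the two kernels coincide, equivalently $\eta\equiv\mu\pmod{\pi^{m+1}}$: this is forced by the inequality $2m+1\le n$, and then one repeats the argument with $\eta$ replaced by $\eta\mu^{-1}$ to deduce $\eta\equiv\mu\pmod{\pi^s}$.

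Granting the lemma, I would conclude as follows. As $z_0$ and $z_D$ are the endpoints of a longest geodesic of the tree $\X(\rho)$, each has a unique neighbour there, so by \Cref{lem:distance-submod-bijection} (with $n=1$) both $\Lambda_{z_0}/\pi\Lambda_{z_0}$ and $\Lambda_{z_D}/\pi\Lambda_{z_D}$ contain a unique $G$-invariant free rank $1$ submodule; hence each is a \emph{non-split} extension, since a split extension of characters produces at least two such submodules (two if the two characters differ modulo $\pi$, and $q+1\ge 4$ if they agree). Reading off the reductions modulo $\pi^s$ in the apartments at $z_0$ and at $z_D$, $\Lambda_{z_0}/\pi^s\Lambda_{z_0}$ is an extension of $\psi_2\bmod\pi^s$ by $\psi_1\bmod\pi^s$ and $\Lambda_{z_D}/\pi^s\Lambda_{z_D}$ is an extension of $\psi_1\bmod\pi^s$ by $\psi_2\bmod\pi^s$, and both are residually non-split because their reductions modulo $\pi$ are the non-split extensions just obtained. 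By the rigidity lemma we may label $\psi_1,\psi_2$ so that $\psi_1\equiv\chi_2$ and $\psi_2\equiv\chi_1\pmod{\pi^s}$; then $\Lambda:=\Lambda_{z_0}$ does the job. (With the other labelling take $\Lambda=\Lambda_{z_D}$; when $m\ge s$ the two characters coincide modulo $\pi^s$ and either choice works.)

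The main obstacle is the rigidity lemma in the range $1\le m<n/2$: the valuation bookkeeping must use the group-homomorphism property of the $\chi_i$ and $\psi_i$ essentially, in order that a single interchange of $\psi_1,\psi_2$ works simultaneously for all $g\in G$, and it is exactly this that pins down $s$ as optimal. Minor points to check are the finiteness of $\X(\rho)$ for irreducible $\rho$ and the elementary tree fact that an endpoint of a longest geodesic has only one neighbour.
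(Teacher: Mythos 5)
Your proof is correct and follows essentially the same route as the paper's: realise $\Lambda$ at an endpoint of a longest geodesic in $\X(\rho)$ (so $\Lambda/\pi\Lambda$ is indecomposable because that endpoint has a unique neighbour), read off the sub/quotient characters $\psi_1,\psi_2$ from the resulting triangular form modulo $\pi^{n}$, and then match them with $(\chi_1,\chi_2)$ modulo $\pi^s$. The ``rigidity lemma'' you isolate is exactly the paper's \Cref{lem:reordering}, which is proved there by the same valuation-propagation through the multiplicativity of the characters; your reformulation via ``every $g$ lies in one of two kernels, a group is not a union of two proper subgroups'' is only a cosmetic reorganisation of that propagation (and, as you note, the pointwise containment in one of the two kernels itself requires the group structure, not merely the per-element factorisation of the characteristic polynomial).
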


Here, by a residually non-split extension, we mean that $\Lambda/\pi\Lambda$ is indecomposable.
We note in \Cref{cor:m-dichotomy} that the integer $s$ depends only on $\rho$ and $n$ and not on the choice of characters $\chi_1, \chi_2$.

\subsection{Isogenies of elliptic curves}

Let $F$ be a number field and let $E/F$ be an elliptic curve. If $\p$ is a prime of good reduction for $E$ and if the absolute ramification index $e_{\p}$ of $\p$ satisfies $e_\p<p-1$, then there is an injective map $E(F)\tors\hookrightarrow E(\F_{\p})$ from the torsion subgroup of $E(F)$ to the points of $E$ over the residue field $\F_{\p}$. In particular, if $\l$ is a prime, $n\ge 1$ and $\l^n\mid \#E(F)\tors$, then $\l^n\mid \#E(\F_{\p})$ for all primes $\p$ of good reduction.

In \cite{KatzAbelian1980}, Katz studied \Cref{question1} in order to prove a converse to this statement. Let \[\rho_{\l}\:\Ga F\to \GL_2(\Ql)\]
be the $\l$-adic Galois representation attached to (the isogeny class of) $E$. Then lattices inside $\rho_\l$ are in bijective correspondence with elliptic curves $E'$ that are $\l$-power isogenous to $E$, and the $\l$-isogeny graph of $E$ is exactly the invariant subtree $\X(\rho_\l)$. By the Chebotarev density theorem and the definition of $\rho_\l$, the condition that $\l^n\mid \#E(\F_{\p})$ for all primes $\p$ of good reduction is equivalent to $\rho_\l$ satisfying
\[\det(1 - \rho_\l(g))\equiv 0\pmod{\l^n}\]
for all $g\in \Ga F$, which is equivalent to $(\ref{eq:det})$ with $\chi_1= 1$ and $\chi_2$ the mod $\l^n$ cyclotomic character.
By answering \Cref{question1} with $K = \Qp$ and $\chi_1 = 1$, Katz showed that if $\l^n\mid \#E(\F_{\p})$ for all primes $\p$ of good reduction, then there is an elliptic curve $E'$, isogenous to $E$ over $F$, such that $\l^n\mid \#E'(F)\tors$. 

Similarly, if $E$ admits a cyclic $\l^n$-isogeny over $F$, then there is a character $\chi\:\Ga F\to (\Z/\l^n\Z)\t$ such that $\chi(g)$ is a root of the characteristic polynomial $P_{\rho(g)}(t)$ modulo $\l^n$, for all $g\in \Ga F$. Moreover, if $E$ has good reduction at a prime $\p\nmid\l$, then $\chi$ is unramified at $\p$. On the other hand, $E$ is isogenous over $F$ to an elliptic curve $E'$ that admits a cyclic $\l^n$-isogeny if and only if there is a line of distance $n$ in its $\l$-isogeny graph, which is exactly $\X(\rho_\l)$. Hence, as an immediate consequence of \Cref{thm:main-intro}, we deduce the following corollary:

\begin{cor}\label{thm:ell-curves}
Let $E$ be an elliptic curve over a number field $F$, and let $S$ be the set of primes of bad reduction for $E$ and the primes above a rational prime $\l$. Let $\rho_\l\:\Ga F\to \GL_2(\Ql)$ be the $\l$-adic Galois representation attached to $E$. The following are equivalent:
\begin{enumerate}
    \item $E$ is isogenous over $F$ to an elliptic curve $E'$ that admits a cyclic $\l^n$-isogeny.
    \item There exists a character $\chi\: \Ga F\to (\Z/\l^n\Z)\t$, unramified outside $S$, such that $\chi(\Frob_{\p})$ is a root of the characteristic polynomial $P_{\rho_\l(\Frob_{\p})}(t)$ modulo $\l^n$, for set of primes $\p$ of $F$ of Dirichlet density $1$.
\end{enumerate}
\end{cor} 

Note that, if in part $(ii)$, we instead just assume that the characteristic polynomial of $\rho_\l(\Frob_\p)$ is reducible modulo $\l^n$ for each $\p$, then the equivalence is false. Indeed, this latter condition is equivalent to $E$ being isogenous to an elliptic curve $E'$ that admits a cyclic $\l^n$-isogeny everywhere locally. However, there exist elliptic curves with no global isogenies that have isogenies everywhere locally \cites{Sutherland, anni, Banwait-Cremona, vogt}. We note that, since $\det\rho_\l$ is the $\l$-adic cyclotomic character, the assumption $\sqrt{\br{\frac{-1}\l}\l}\notin F$ imposed in \cite{Sutherland} and \cite{anni} forces $\rho_\l$ to be residually multiplicity-free.

\section{Preliminaries}

In this section, we recall key properties of the Bruhat--Tits tree $\X$ of $\PGL_2(K)$ and, for a representation $\rho\:G\to\GL_2(K)$, we define the $\rho(G)$-invariant subtree $\X(\rho)$ and discuss its shape. We then discuss the decompositions of $\rho$ modulo $\pi^n$ and define the index of irreducibility $n(\rho)$ and the index of irreducibility with multiplicity $m(\rho)$.
Our key reference is \cite{Bellaiche-arbres}*{\S2} (see also \cite{serre-trees}). 

\subsection{The \texorpdfstring{$\rho(G)$}{G}-invariant subtree}

Let $\rho\:G\map \GL_2(\fld{K})$ be a representation of a group $G$ and assume that there exists a $\rho(G)$-stable lattice $\Lambda\sub \fld{K}^2$. In particular, for every $g\in G$ the characteristic polynomial $P_{\rho(g)}(t)$ of $g$ is an element of $\O{}[t]$.
This condition is automatically satisfied for most representations of interest, for example, if $G$ is compact and $\rho$ is continuous.

Choosing a basis for $\Lambda$, we obtain a representation $\rho_\Lambda\:G\map \GL_2(\O{K})\sub \GL_2(K)$ that is isomorphic to $\rho$. 
Hence, we can define the residual representation $\orho_\Lambda\:G\to \GL_2(\F)$ as well as the mod $\pi^n$ representations $\rho_\Lambda\pmod{\pi^n}$. In general, the isomorphism class of the representation $\orho_\Lambda$ depends on the homothety class of $\Lambda$. 

\begin{defn}\label{defn_invariant-subtree}
    Let $A\sub \M_2(K)$ be a set of matrices.
    We denote by $\X(A)$ the induced subtree of $\X$ of all $A$-stable vertices, i.e.\ vertices $x\in \X$ for which $a\Lambda_x\sub \Lambda_x$ for all $a\in A$ and for some (and hence any) representative $\Lambda_x$.
    We define $\X(\rho)$ to be $\X(\rho(G))$.
\end{defn}

\begin{remarks}\mbox{}
\begin{enumerate}[leftmargin=*]
    \item If $A\sub A'$, then $\X(A)\supseteq \X(A')$.
    \item If $a,b\in \M_2(K)$ and $\lambda, \mu \in \O{}$, then $\X(\{a,b\}) = \X(\{a,b,ab\}) = \X(\{a,b, \lambda a + \mu b\})$. Hence, if $A\sub \M_2(K)$ is a finitely-generated $\O{}$-algebra, then we can compute $\X(A)$ by computing $\X(\{a_1, \ldots, a_n\})$ for an explicit set of generators. 
    \item Similarly, if $R\:\O{}[G]\to \M_2(K)$ is the algebra homomorphism corresponding to $\rho\:G\to \GL_2(K)$, then $\X(\rho) = \X(R(\O{}[G]))$.
\end{enumerate}
\end{remarks}

The following lemma shows the relationship between the structure of the graph $\X(\rho)$ and subrepresentations of $\rho_\Lambda\pmod{\pi^n}$:

\begin{lem}[\cite{Bellaiche-arbres}*{Prop.\ 11}]\label{lem:distance-submod-bijection}
    Let $x\in \X(A)$ and fix a representative $\Lambda_x$ of $x$. There is a bijection between
    \begin{itemize}
        \item Points $y\in \X(A)$ with $d(x, y) = n$;
        \item Free, rank one $\O{K}/\pi^n\O{K}$-submodules of $\Lambda_x/\pi^n\Lambda_x$ that are $A$-stable.
    \end{itemize}
Given a point $y\in\X(A)$ with $d(x,y)=n$, the corresponding submodule of $\Lambda_x/\pi^n\Lambda_x$ is given by $\Lambda_y/\pi^n\Lambda_x$, where $\Lambda_y$ is chosen so that $\pi^n\Lambda_x\subsetneq \Lambda_y\subsetneq\Lambda_x$.
\end{lem}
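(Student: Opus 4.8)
The plan is to produce two mutually inverse maps between the two sets, using the elementary divisor theorem over the discrete valuation ring $\O{K}$ to bookkeep distances in $\X$. Fix $x\in\X(A)$ and the chosen representative $\Lambda_x$, and write $\bar\Lambda=\Lambda_x/\pi^n\Lambda_x$, a free $\O{K}/\pi^n\O{K}$-module of rank $2$. The starting point is the standard submodule correspondence: $M\mapsto M/\pi^n\Lambda_x$ is an inclusion-preserving bijection between $\O{K}$-submodules $M$ with $\pi^n\Lambda_x\sub M\sub\Lambda_x$ — each of which is automatically a lattice — and $\O{K}/\pi^n\O{K}$-submodules $V\sub\bar\Lambda$, with inverse $V\mapsto\tilde V:=\{v\in\Lambda_x : v+\pi^n\Lambda_x\in V\}$. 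Since $x\in\X(A)$, both $\Lambda_x$ and $\pi^n\Lambda_x$ are $A$-stable, so $V$ is $A$-stable if and only if $\tilde V$ is, and then $[\tilde V]\in\X(A)$; thus this bijection restricts to one between $A$-stable objects on both sides.

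To build the map from vertices to submodules, take $y\in\X(A)$ with $d(x,y)=n$. By the description of distance recalled above there is a basis $(v_1,v_2)$ of $\Lambda_x$ such that $(v_1,\pi^n v_2)$ is a basis of a representative $\Lambda_y$ of $y$, and — since $n\ge1$ — a short check shows $\Lambda_y$ is the \emph{unique} representative of $y$ with $\pi^n\Lambda_x\subsetneq\Lambda_y\subsetneq\Lambda_x$ (every other representative is $\pi^k\Lambda_y$ and fails one of the two strict inclusions). The image of $v_1$ generates $\Lambda_y/\pi^n\Lambda_x$ with annihilator exactly $\pi^n\O{K}$, so $\Lambda_y/\pi^n\Lambda_x$ is free of rank $1$. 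Combined with the first paragraph, this shows $\Phi\colon y\mapsto\Lambda_y/\pi^n\Lambda_x$ is a well-defined map into $A$-stable, free, rank $1$ submodules of $\bar\Lambda$.

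It remains to see that $\Psi\colon V\mapsto[\tilde V]$ is a two-sided inverse, and the one nontrivial point is that $d(x,[\tilde V])=n$ when $V$ is free of rank $1$. By the elementary divisor theorem, choose a basis $(e_1,e_2)$ of $\Lambda_x$ and integers $0\le a\le b$ with $(\pi^a e_1,\pi^b e_2)$ a basis of $\tilde V$; since $\pi^n\Lambda_x\sub\tilde V\sub\Lambda_x$ we have $0\le a\le b\le n$, hence $\tilde V/\pi^n\Lambda_x\cong\O{K}/\pi^{n-a}\O{K}\oplus\O{K}/\pi^{n-b}\O{K}$. This quotient is free of rank $1$ precisely when $a=0$ and $b=n$, in which case $d(x,[\tilde V])=b-a=n$ and $\tilde V$ is the sandwiched representative. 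The two compositions are now identities by the correspondence of the first paragraph, so $\Phi$ and $\Psi$ are mutually inverse and the explicit description of the bijection is exactly $\Phi$. I expect the main obstacle to be precisely this distance computation — converting ``free of rank $1$'' into the elementary divisors $(0,n)$ and hence into ``$d(x,y)=n$'' — together with the small but essential verification that the sandwiched representative is unique, which is what makes $\Phi$ well defined on homothety classes.
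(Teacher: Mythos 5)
Your proof is correct and complete. Note that the paper itself does not prove this lemma but cites it directly from Bella\"iche--Chenevier \cite{Bellaiche-arbres}*{Prop.\ 11}, so there is no in-paper argument to compare against; your route --- the lattice/submodule correspondence theorem plus the elementary divisor theorem to translate ``free of rank $1$'' into the Smith normal form $(0,n)$, together with the uniqueness of the sandwiched representative --- is the standard one and is essentially what the cited source does.
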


\begin{remark}\label{rem:bounded-irred}
Note that \Cref{lem:distance-submod-bijection} immediately shows that \Cref{thm:main-intro} is equivalent to \Cref{question1}. Moreover, we see that:
\begin{enumerate}
    \item The tree $\X(\rho)$ is bounded if and only if $\rho$ is an irreducible representation \cite{Bellaiche-arbres}*{Lem.\ 10}.
    \item  $\X(\rho)$ consists of a single point $x$ if and only if $\orho_{\Lambda_x}$ is irreducible.
    \item A vertex $x\in \X(\rho)$ is a leaf, i.e.\ a vertex with exactly one neighbour, if and only if $\Lambda_x/\pi\Lambda_x$ is indecomposable.
\end{enumerate}
\end{remark}

\subsection{The shape of \texorpdfstring{$\X(\rho)$}{the invariant building}}

In \cite{Bellaiche-arbres}, Bellaïche--Chenevier classify the possible shapes of $\X(\rho)$.
To describe this classification we recall their terminology.

\begin{defn}
    Let $S$ be a line segment in $\X$, and let $r$ be a positive integer. The \emph{band} $B(S,r)$ with \emph{nerve} $S$ and \emph{radius} $r$ is the subtree of $\X$ consisting of all vertices $x$ with $d(x, S)\le r$.
\end{defn}

We distinguish two particular types of bands:
\begin{itemize}[leftmargin=*]
    \item If $S=\braces{x}$ is a single vertex, $B(x,r):= B(\{x\}, r)$ is the ball of radius $r$ and centre $x$.
    \item If $S=\braces{x,y}$ consists of two adjacent vertices, we call $B(S,r)$ a \emph{generalised ball}. One can think of $B(S,r)$ as a ball of radius $r+\frac{1}{2}$ around the middle of the segment $[x,y]$.
\end{itemize}

   \begin{figure}[h]
       \centering
       \begin{tikzpicture}[
  grow cyclic,
  level distance=0.75cm,
  level 1/.style={sibling angle=90},
  level 2/.style={sibling angle=45},
  level 3/.style={sibling angle=30},
  nodes={circle,draw,inner sep=+0pt, minimum size=2pt},
  ]
\path[rotate=45]
  node[fill=black] {0}
  child foreach \cntI in {1,...,4} {
    node {}
    child foreach \cntII in {1,...,3} { 
      node {}
      child foreach \cntIII in {1,...,3} {
        node {}
      }
    }
  };
\end{tikzpicture}
\caption{A ball of radius $3$ in the Bruhat--Tits tree for $\Q_3$}
   \end{figure}

\begin{figure}[h]
       \centering
       \begin{tikzpicture}[
  grow cyclic,
  level distance=0.75cm,
  level 1/.style={sibling angle=60},
  level 2/.style={sibling angle=30},
  level 3/.style={sibling angle=15},
  nodes={circle,draw,inner sep=+0pt, minimum size=2pt},
  ]
\path[rotate=30]
  node[fill=black] {0}
    child[sibling angle=60] foreach \cntI in {1,...,3} {
    node {}
    child foreach \cntII in {1,...,3} { 
      node {}
      child foreach \cntIII in {1,...,3} {
        node {}
      }
    }
  }
  child[level distance=0.75cm, sibling angle=100] {node[fill=black]{0} 
  child[level distance=0.75cm, sibling angle=60] foreach \cntI in {1,...,3} {
    node {}
    child[sibling angle=30] foreach \cntII in {1,...,3} { 
      node {}
      child[sibling angle=15] foreach \cntIII in {1,...,3} {
        node {}
      }
    }
  }};
\end{tikzpicture}
\caption{A generalised ball of radius $3$ in the Bruhat--Tits tree for $\Q_3$}
   \end{figure} 
   
Bella\"iche--Chenevier have shown that if $\rho$ is irreducible, then $\X(\rho)$ is a band with finite diameter and radius \cite{Bellaiche-arbres}*{Thm.\ 21, Prop.\ 24}.

\begin{defn}
    Suppose that $\rho$ is irreducible, so that $\X(\rho)$ is a band. Define $d(\rho)$ to be the diameter of $\X(\rho)$ and $r(\rho)$ to be its radius.
\end{defn}

By definition, the diameter of a tree is the length of a maximal path. The diameter of the band $B(S,r)$ is therefore $\l(S) + 2r$, where $\l(S)$ is the length of the nerve $S$.

 \begin{figure}[h]
   \begin{tikzpicture}[
  grow cyclic,
  level distance=1cm,
  level 1/.style={sibling angle=90},
  level 2/.style={sibling angle=70},
  level 3/.style={sibling angle=30},
  level 3/.style={sibling angle=45},
  level 5/.style={sibling angle=15},
  nodes={circle,draw,inner sep=+0pt, minimum size=2pt},
  ]
\path[rotate=225]
  node[fill=black] {0}
   child foreach \cntI in {1,...,3} {
    node {}
    child[sibling angle=20] foreach \cntII in {1,...,3} {
    node {}
    }
    }
     child {node[fill=black]{1}
     child[sibling angle=90] {node{}
     child[sibling angle=20] foreach \cntI in {1,...,3} {
    node {}}}
     child {node[fill=black]{2} 
     child[sibling angle=90] {node{} 
     child[sibling angle=20] foreach \cntII in {1,...,3} {
    node {}
    }}
      child[sibling angle=90] {node[fill=black]{3}
      child[sibling angle=90] foreach \cntI in {1,...,3} {
    node {}
    child[sibling angle=20] foreach \cntII in {1,...,3} {
    node {}
    }}
      }
      child[sibling angle=90] {node{} 
     child[sibling angle=20] foreach \cntII in {1,...,3} {
    node {}
    }}
      } child[sibling angle=90] {node{} 
      child[sibling angle=20] foreach \cntI in {1,...,3} {
    node {}}
      }};
\end{tikzpicture}
\caption{A band in the Bruhat--Tits tree for $\Q_3$, with diameter $7$ and radius $2$. The solid points form the nerve $S$.}
\end{figure}

\newpage

To further illustrate the connection between the shape of $\X(\rho)$ and the representation theory of $\rho$, we record the following useful lemma:

\begin{lem}\label{lem:ball-implies-scalar}
Fix $A\sub \M_2(K)$ and let $H$ be the semigroup generated by $A$. Fix $x\in \X(A)$ and $r\ge 1$. Then $H$ acts by a one-dimensional character on $\Lambda_x/\pi^r\Lambda_x$ if and only if $B(x, r)\sub \X(A)$.
\end{lem}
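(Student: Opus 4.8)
The plan is to prove both directions of Lemma~\ref{lem:ball-implies-scalar} by translating the geometric condition $B(x,r)\sub\X(\rho)$ into an algebraic statement about the action of $G$ on $\Lambda_x/\pi^r\Lambda_x$ via Lemma~\ref{lem:distance-submod-bijection}. Fix a representative $\Lambda_x$ and let $M = \Lambda_x/\pi^r\Lambda_x$, a free rank $2$ module over $\O{K}/\pi^r\O{K}$ carrying a $G$-action through $\rho_{\Lambda_x}\pmod{\pi^r}$.

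\textbf{($\Leftarrow$) Suppose $B(x,r)\sub\X(\rho)$.} The neighbours of $x$ at distance exactly $r$ correspond, by Lemma~\ref{lem:distance-submod-bijection}, to the $G$-stable free rank $1$ $\O{K}/\pi^r\O{K}$-submodules of $M$. First I would observe that the free rank $1$ submodules of $M$ are exactly the lines $(\O{K}/\pi^r\O{K})\cdot v$ where $v$ ranges over primitive vectors of $M$ (i.e.\ vectors not in $\pi M$), and that as $v$ ranges over a set of representatives for $\mathbb P^1(\O{K}/\pi^r\O{K})$ these lines are distinct and sweep out all of $M$: every primitive vector of $M$ lies in at least one such line. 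Since $B(x,r)\sub\X(\rho)$ forces \emph{every} vertex at distance $r$ from $x$ to lie in $\X(\rho)$, \emph{every} free rank $1$ submodule of $M$ is $G$-stable. Now if $g\in G$ acts on $M$ and fixes every line, then for each primitive $v$ we have $\rho(g)v = \lambda_v v$ for some scalar $\lambda_v\in(\O{K}/\pi^r\O{K})\t$; a standard argument (apply this to $v$, $w$ and $v+w$ for a basis $v,w$) shows $\lambda_v$ is independent of $v$, so $\rho(g)$ acts as a single scalar. Hence $G$ acts on $M$ by a one-dimensional character. One should first note the case $r=0$ or small-$q$ degeneracies are harmless: since $\F$ has characteristic $\ne 2$ we have $q\ge 3$, so $\mathbb P^1(\O{K}/\pi^r\O{K})$ has at least three points, enough to run the three-vector argument.

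\textbf{($\Rightarrow$) Suppose $G$ acts on $M$ by a character $\eta\:G\to(\O{K}/\pi^r\O{K})\t$.} Then $\rho(g)$ acts as the scalar $\eta(g)$ on $M$, so it stabilises \emph{every} submodule of $M$, in particular every free rank $1$ submodule. I would then argue by induction on $0\le k\le r$ that $B(x,k)\sub\X(\rho)$: the inductive step is that any vertex $z$ with $d(x,z)=k+1\le r$ lies on a geodesic $x,\dots,y,z$ with $d(x,y)=k$, $y\in\X(\rho)$ by induction, and $z$ a neighbour of $y$; one checks that the $G$-stability of $\Lambda_z$ follows because $z$ also corresponds, via Lemma~\ref{lem:distance-submod-bijection} applied at $x$, to a free rank $1$ submodule of $\Lambda_x/\pi^{k+1}\Lambda_x$, which is $G$-stable since $\rho(g)$ acts by the scalar $\eta(g)\pmod{\pi^{k+1}}$ there too. (Alternatively, and more cleanly, one can skip the induction: directly, for $0\le k\le r$, $\rho(g)$ acts as a scalar on $\Lambda_x/\pi^k\Lambda_x$, so every free rank $1$ submodule there is $G$-stable, hence by Lemma~\ref{lem:distance-submod-bijection} every vertex at distance $k$ from $x$ lies in $\X(\rho)$; taking the union over $k\le r$ gives $B(x,r)\sub\X(\rho)$.)

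The main obstacle, such as it is, is purely bookkeeping: being careful that ``free rank $1$ submodule'' is the right notion (as opposed to arbitrary rank $1$ submodules, which over $\O{K}/\pi^r\O{K}$ can be $\O{K}/\pi^j\O{K}$ for $j<r$) so that Lemma~\ref{lem:distance-submod-bijection} applies on the nose, and verifying the ``every primitive vector spans a free line and every free line is spanned by a primitive vector'' correspondence that underpins the ($\Leftarrow$) direction. Once that dictionary is set up, both implications are short.
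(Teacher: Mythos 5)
Your proposal is correct and follows essentially the same route as the paper: both directions reduce, via Lemma~\ref{lem:distance-submod-bijection}, to the observation that $G$ acts by a character on $\Lambda_x/\pi^r\Lambda_x$ if and only if every free rank-$1$ $\O{K}/\pi^r\O{K}$-submodule is $G$-stable. You have simply filled in the details (the primitive-vector/$\mathbb{P}^1$ dictionary and the $v,w,v+w$ argument for scalarity, plus the level-$k\le r$ check) that the paper's two-line proof leaves implicit.
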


\begin{proof}
    Note that $H$ acts as a one-dimensional character on $\Lambda_x/\pi^r\Lambda_x$ if and only if every free rank one $\O{K}/\pi^r\O{K}$-submodule of $\Lambda_x/\pi^r\Lambda_x$ is stable under the action of $H$. By \Cref{lem:distance-submod-bijection}, this latter condition is equivalent to $B(x, r)\sub\X(H)$. By \Cref{rem:bounded-irred}, $\X(A) = \X(H)$.
\end{proof}

\subsection{Conjugate characters modulo \texorpdfstring{$\pi^n$}{modulo powers of the uniformiser}}
Let $\rho\:G\to\GL_2(K)$ be a representation. 

\begin{defn}[c.f.\ \cite{Bellaiche-arbres}*{Def.\ 29}]\label{defn-n}
    Let $\chi_1,\chi_2\:G\to (\O{K}/\pi^n\O{K})\t$ be a pair of characters modulo $\pi^n$ for some positive integer $n$. We call $(\chi_1, \chi_2)$ a \emph{pair of conjugate characters for $\rho$ modulo} $\pi^n$ if the characteristic polynomial $P_{\rho(g)}(t)$ of $\rho(g)$ factors as 
    \[P_{\rho(g)}(t):=\det(t - \rho(g)) \equiv (t-\chi_1(g))(t-\chi_2(g)) \pmod{\pi^n}\]
    for all $g\in G$.
\end{defn}

\begin{remark}
    When the residue characteristic of $K$ is not $2$, by the identity $\trace(A)^2-\trace(A^2)=2\det(A)$ for all $A\in \M_2(K)$, the above condition is equivalent to
    \[\trace\rho(g) \equiv \chi_1(g) + \chi_2(g)\pmod{\pi^n}\]
    for all $g\in G$.
\end{remark}

\begin{remark}
    If $G$ is compact, $K$ is a $p$-adic local field, $\rho$ is continuous and $\chi\:G\to(\O{K}/\pi^n\O{K})\t$ is a character that factors through the image of $\rho$, then $\chi$ is automatically continuous. Indeed, any finite index subgroup of the closed subgroup $\rho(G)\sub \GL_2(\O{K})$ is open in $\rho(G)$ \cite{Lubotzky-Mann-powerful-p-groups}*{Thm.\ A}.
\end{remark}

\begin{defn}[c.f.\ \cite{Bellaiche-arbres}*{Def.\ 30}]\label{defn-m}
    If $(\chi_1,\chi_2)$ is a pair of characters modulo $\pi^n$, let $m(\chi_1, \chi_2)$ denote the largest integer $m\le n$ such that 
    \[\chi_1(g) \equiv\chi_2(g)\pmod{\pi^m}\]
    for all $g\in G$. 
\end{defn}

In particular, $m(\chi_1, \chi_2) = 0$ if $\chi_1\not\equiv\chi_2\pmod{\pi}$.

If $\rho$ is not residually multiplicity-free, then, in general, there exist multiple distinct pairs of conjugate characters modulo $\pi^n$ for every $n\ge 2$. The following lemma shows that any two pairs of conjugate characters agree modulo $\pi^s$, where $s$ is as in \Cref{thm:ribet-intro}.

\begin{lem}\label{lem:reordering}
Let $(\chi_1,\chi_2)$ and $(\eta_1,\eta_2)$ be two pairs of conjugate characters modulo $\pi^n$, for some $n$.
Let 
\[s = s(\chi_1, \chi_2) =\begin{cases}n-m(\chi_1, \chi_2) &\text{if }m(\chi_1,\chi_2)<\frac n2\\ \ceiling*{\frac n2} &\text{if } m(\chi_1,\chi_2)\ge\frac n2.\end{cases}\]
Then, up to reordering, we have
\[\chi_1\equiv\eta_1\pmod{\pi^s}\quad\text{and}\quad\chi_2\equiv\eta_2\pmod{\pi^s}.\]
\end{lem}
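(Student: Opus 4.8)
The plan is to work with the characteristic polynomials. Both pairs of characters define the same characteristic polynomial $P_{\rho(g)}$ modulo $\pi^n$, so for every $g\in G$ we have the congruence
\[
(t-\chi_1(g))(t-\chi_2(g)) \equiv (t-\eta_1(g))(t-\eta_2(g)) \pmod{\pi^n},
\]
equivalently $\chi_1(g)+\chi_2(g)\equiv\eta_1(g)+\eta_2(g)$ and $\chi_1(g)\chi_2(g)\equiv\eta_1(g)\eta_2(g)\pmod{\pi^n}$. Fix $g$ and write $a=\chi_1(g)$, $b=\chi_2(g)$, and $a'=\eta_1(g)$, $b'=\eta_2(g)$. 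The pair $\{a,b\}$ and $\{a',b'\}$ are the roots of two monic quadratics over $\O K/\pi^n\O K$ that agree modulo $\pi^n$. First I would show that, after a suitable (global, not $g$-dependent) reordering, $a\equiv a'$ and $b\equiv b'$ modulo $\pi^s$, where $s$ is the quantity in the statement. Since $m(\chi_1,\chi_2)$ is the exact level of congruence between $\chi_1$ and $\chi_2$, we have $v_\pi(a-b)\ge m:=m(\chi_1,\chi_2)$ for all $g$, with equality for at least one $g$. The key local computation: from $a+b\equiv a'+b'$ we get $(a-a')\equiv(b'-b)\pmod{\pi^n}$, and from $ab\equiv a'b'$ we get $a(a-a')\equiv b'(b'-b)\pmod{\pi^n}$ after rearranging; wait—more carefully, $ab-a'b'=ab-a'b+a'b-a'b'=b(a-a')+a'(b-b')$, so $b(a-a')+a'(b-b')\equiv 0$, and using $b-b'\equiv a'-a$ (from the trace relation, since $a-a'\equiv -(b-b')$ means $b-b'\equiv a'-a$) gives $b(a-a')-a'(a-a')\equiv 0$, i.e.\ $(b-a')(a-a')\equiv 0\pmod{\pi^n}$. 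Now choose the labelling of $\{a',b'\}$ so that $v_\pi(a-a')$ is maximal; one checks $v_\pi(b-a')\ge m$ (since $b-a'\equiv(b-a)+(a-a')$ and $v_\pi(b-a)\ge m$, while $v_\pi(a-a')\ge m$ too, as I will verify), hence $v_\pi(a-a')\ge n-v_\pi(b-a')\ge n-\max(v_\pi(a-a'),\dots)$—this needs care and is where the casework on $m<n/2$ versus $m\ge n/2$ enters.

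More precisely, the argument splits: if $v_\pi(a-a')\ge v_\pi(b-a')$ we directly get $v_\pi(a-a')\ge n - v_\pi(b-a') \ge n - v_\pi(a-a')$ is the wrong direction; instead the clean statement is that from $(b-a')(a-a')\equiv 0\pmod{\pi^n}$ we get $v_\pi(a-a')+v_\pi(b-a')\ge n$. Symmetrically $v_\pi(a-b')+v_\pi(b-b')\ge n$, and $v_\pi(b-a')+v_\pi(b-b')\ge v_\pi(a'-b')\ge m$ and similar triangle inequalities. Choosing the labelling to maximise $\min(v_\pi(a-a'),v_\pi(b-b'))$, a short optimisation over these valuation inequalities yields $v_\pi(a-a'),v_\pi(b-b')\ge s$ with $s$ as defined: when $m<n/2$, the bottleneck $v_\pi(b-a')\le n-m$ forces $v_\pi(a-a')\ge m$ to trade against it, giving the value $n-m$; when $m\ge n/2$, the symmetric constraints force the common value $\lceil n/2\rceil$. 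This is the main obstacle — pinning down the optimal labelling uniformly in $g$ and checking the elementary valuation inequalities actually produce exactly the claimed $s$, including the ceiling in the balanced case.

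The final, and slightly subtle, point is that the reordering must be chosen \emph{once}, independently of $g$, so that the resulting congruences $\chi_1\equiv\eta_1$ and $\chi_2\equiv\eta_2$ hold as congruences of characters, not merely pointwise for each $g$ with a $g$-dependent choice. To handle this, I would fix a single $g_0$ realising $v_\pi(\chi_1(g_0)-\chi_2(g_0))=m$ (so $\chi_1(g_0)\not\equiv\chi_2(g_0)\pmod{\pi^{m+1}}$), use the local analysis at $g_0$ to fix the labelling of $(\eta_1,\eta_2)$, and then argue that for every other $g$ the same labelling works: if it did not, then swapping would give, for that $g$, two near-equal options, which combined with the $g_0$ constraint and the multiplicativity $\chi_i(g g_0)=\chi_i(g)\chi_i(g_0)$ (all characters) would contradict the exactness of $m$. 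Concretely, $\chi_1\cdot\eta_1^{-1}$ and $\chi_2\cdot\eta_2^{-1}$ are characters $G\to(\O K/\pi^n)\t$ reducing to $1$ modulo $\pi^s$ at $g_0$; since the set of $g$ where a character is $\equiv 1\pmod{\pi^s}$ is a subgroup, and the wrong labelling would swap two such characters, the gap would propagate — ruling this out is exactly the coherence argument. Once labelling coherence is established, the pointwise bounds $v_\pi(\chi_i(g)-\eta_i(g))\ge s$ for all $g$ are precisely the assertion $\chi_i\equiv\eta_i\pmod{\pi^s}$, completing the proof.
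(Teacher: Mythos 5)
Your key identity $(\eta_1(g)-\chi_1(g))(\eta_1(g)-\chi_2(g))\equiv 0\pmod{\pi^n}$ is exactly the one the paper uses, and fixing a single $g_0$ with $v_\pi(\chi_1(g_0)-\chi_2(g_0))=m$ to pin the labelling is also the right move. However, your middle step --- that a $g$-by-$g$ choice of labelling already gives $v_\pi(\chi_i(g)-\eta_i(g))\ge s$ at each individual $g$ --- is not correct when $m<n/2$, and this is not merely a bookkeeping issue. At a point $g$ with $v_\pi(\chi_1(g)-\chi_2(g))$ strictly between $m$ and $n/2$ (which can happen), the factorisation together with $v_\pi(A)+v_\pi(B)\ge n$ and $v_\pi(A-B)$ large only forces $\max(v_\pi(A),v_\pi(B))\ge\lceil n/2\rceil$, which can be strictly smaller than $n-m$. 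So there is no pointwise bound of $s$, and the coherence step has to do the heavy lifting of upgrading $\lceil n/2\rceil$ to $n-m$ at such $g$, not merely of fixing a sign that the pointwise analysis already determined. Your sketch presents coherence as choosing which of two equally good local options to take, when in fact for some $g$ neither local option reaches $s$ without invoking multiplicativity.

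The paper's proof addresses exactly this: it writes $\eta_1(h)-\chi_j(h)$ via the cocycle-type identities $\eta_1(h)-\chi_j(h)=(\eta_1(g_0)-\chi_j(g_0))\eta_1(g_0^{-1}h)+\chi_j(g_0)(\eta_1(g_0^{-1}h)-\chi_j(g_0^{-1}h))$ for $j=1,2$, and then feeds in the pointwise factorisation at both $g_0$ and $g_0^{-1}h$ to conclude $v_\pi(\eta_1(h)-\chi_1(h))\ge n-m$ for \emph{every} $h$. Your ``$\chi_1\eta_1^{-1}\equiv 1\pmod{\pi^s}$ defines a subgroup'' observation is true but insufficient on its own: it only tells you the good set is a subgroup $H_1$ and $g_0\in H_1$, which does not give $H_1=G$. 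A subgroup-style closure of this gap is possible, but it requires the second subgroup to live at a \emph{different} modulus: take $H_2'=\{g:v_\pi(\eta_1(g)-\chi_2(g))\ge m+1\}=\ker(\eta_1\chi_2^{-1}\bmod\pi^{m+1})$; the factorisation then gives $G=H_1\cup H_2'$ pointwise, $g_0\notin H_2'$, and a group is never the union of two proper subgroups, so $H_1=G$. Your sketch uses $\pi^s$ for both subgroups and therefore cannot even state the correct cover $G=H_1\cup H_2'$. As written, the proposal has a real gap both in the asserted pointwise optimisation and in the coherence step it defers to.
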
 

\begin{proof}
    First suppose that $m(\chi_1, \chi_2) \ge \frac n2$. Let $i = 1$ or $2$. Then, for each $g\in G$, 
    \[v_\pi(\chi_1(g) - \chi_2(g))\ge \frac n2.\]
    It follows that $v_\pi(\eta_i(g) - \chi_1(g)) \ge \frac n2$ if and only if $v_\pi(\eta_i(g) - \chi_2(g)) \ge \frac n2$.

    Now, $\eta_i(g)$ is a root of $P_{\rho(g)}(t)\pmod{\pi^n}$, i.e.
    \[(\eta_i(g)-\chi_1(g))(\eta_i(g)-\chi_2(g))\equiv 0\pmod{\pi^n}.\]
    It follows that $\eta_i\equiv\chi_1\equiv\chi_2\pmod{\pi^{\ceiling{\frac n2}}}$. 
    
    Now suppose that $m(\chi_1, \chi_2) < \frac n2$. Choose $g\in G$ for which $v_\pi(\chi_1(g) - \chi_2(g)) = m(\chi_1, \chi_2)$. As before, we have
    \[(\eta_1(g)-\chi_1(g))(\eta_1(g)-\chi_2(g))\equiv 0\pmod{\pi^n}.\]
    Hence, at least one of the two factors must have valuation greater than or equal to $\frac{n}2$. Since ${m(\chi_1, \chi_2) <\frac n2}$ and since
    \[v_\pi\big((\eta_1(g)-\chi_2(g)) - (\eta_1(g)-\chi_1(g))\big) = v_\pi(\chi_1(g) - \chi_2(g)) = m(\chi_1, \chi_2),\]
    we see that one of the factors must have valuation exactly $m(\chi_1, \chi_2)$. Up to reordering $\chi_1$ and $\chi_2$, we may therefore assume that 
    \[v_\pi(\eta_1(g) - \chi_1(g)) \ge n - m(\chi_1, \chi_2)\quad\text{and}\quad v_\pi(\eta_1(g)-\chi_2(g)) = m(\chi_1, \chi_2).\]
    Let $h\in G$ and consider the two identities
       \begin{align*}
        \eta_1(h)-\chi_1(h)&=(\eta_1(g)-\chi_1(g))\eta_1(g^{-1}h)+\chi_1(g)(\eta_1(g^{-1}h)-\chi_1(g^{-1}h)),\\
        \eta_1(h)-\chi_2(h)&=(\eta_1(g)-\chi_2(g))\eta_1(g^{-1}h)+\chi_2(g)(\eta_1(g^{-1}h)-\chi_2(g^{-1}h)).
        \end{align*} 
    If $v_\pi(\eta_1(g^{-1}h)-\chi_1(g^{-1}h)) \ge n-m(\chi_1, \chi_2)$, it follows from the first identity that $\eta_1(h)\equiv\chi_1(h)\pmod{\pi^{n-m(\chi_1, \chi_2)}}$. Assume that $v_\pi(\eta_1(g^{-1}h)-\chi_1(g^{-1}h)) < n-m(\chi_1, \chi_2)$. Since $\eta_1(g\ii h)$ is a solution of the characteristic polynomial of $\rho(g\ii h)$, we have
    \[(\eta_1(g\ii h) - \chi_1(g\ii h))(\eta_1(g\ii h) - \chi_2(g\ii h)) \equiv 0\pmod{\pi^n}.\]
    Hence, $v_\pi(\eta_1(g\ii h) - \chi_2(g\ii h)) > m(\chi_1, \chi_2)$. Since $v_\pi(\eta_1(g) - \chi_2(g)) = m(\chi_1, \chi_2)$, it follows from the second identity that
    \[v_\pi(\eta_1(h) -\chi_2(h)) = m(\chi_1, \chi_2).\]
    Since $(\eta_1(h)-\chi_1(h))(\eta_1(h)-\chi_2(h))\equiv 0\pmod{\pi^n}$, we have $\eta_1(h) \equiv \chi_1(h) \pmod{\pi^{n-m(\chi_1, \chi_2))}}$ in this case as well.
    
    Repeating the above argument with $\eta_1$ replaced by $\eta_2$, we see that either $\chi_2\equiv \eta_1\pmod{\pi^{n-m(\chi_1,\chi_2)}}$, or $\chi_2\equiv \eta_2\pmod{\pi^{n-m(\chi_1,\chi_2)}}$.
    The first would imply that $\chi_1\equiv \chi_2\pmod{\pi^{n-m(\chi_1,\chi_2)}}$, contradicting the assumption that $m(\chi_1,\chi_2)<n/2$.
    Therefore, $\chi_2\equiv \eta_2\pmod{\pi^{n-m(\chi_1,\chi_2)}}$.
\end{proof}

\begin{remark}
    In fact, the proof of \Cref{lem:reordering} shows that, if $m(\chi_1, \chi_2) \ge \frac n2$, then $\chi_1\equiv\chi_2\equiv\eta_1\equiv\eta_2\pmod{\pi^s}$.
\end{remark}

\begin{cor}\label{cor:m-dichotomy}
Let $(\chi_1,\chi_2)$ and $(\eta_1,\eta_2)$ be two pairs of conjugate characters modulo $\pi^n$, for some $n$.
    \begin{enumerate}
    \item If $m(\chi_1,\chi_2)\geq n/2$, then $m(\eta_1,\eta_2)\geq n/2$.
    \item If $m(\chi_1,\chi_2)<n/2$, then $m(\eta_1,\eta_2)=m(\chi_1,\chi_2)$.
    \end{enumerate}
    
\end{cor}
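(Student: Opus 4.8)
The plan is to read both statements off of Lemma~\ref{lem:reordering} together with the remark that follows it. First I would handle $(i)$: suppose $m(\chi_1,\chi_2)\ge n/2$. By the remark after Lemma~\ref{lem:reordering}, we then have $\chi_1\equiv\chi_2\equiv\eta_1\equiv\eta_2\pmod{\pi^s}$ with $s=\ceiling{n/2}$. In particular $\eta_1\equiv\eta_2\pmod{\pi^{\ceiling{n/2}}}$, and since $\ceiling{n/2}\ge n/2$, this says exactly that $m(\eta_1,\eta_2)\ge n/2$, as desired. (One could also argue symmetrically: if instead $m(\eta_1,\eta_2)<n/2$, then by the case already proved applied with the roles of the two pairs swapped we would get $m(\chi_1,\chi_2)=m(\eta_1,\eta_2)<n/2$, a contradiction.)

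For $(ii)$, suppose $m(\chi_1,\chi_2)<n/2$. By part $(i)$ applied to the pair $(\eta_1,\eta_2)$ — using the contrapositive form, i.e.\ if $m(\eta_1,\eta_2)\ge n/2$ then $m(\chi_1,\chi_2)\ge n/2$ — we deduce that $m(\eta_1,\eta_2)<n/2$ as well. Now Lemma~\ref{lem:reordering}, applied once with base pair $(\chi_1,\chi_2)$ and once with base pair $(\eta_1,\eta_2)$, gives (after matching up the orderings) that $\chi_1\equiv\eta_1\pmod{\pi^{n-m(\chi_1,\chi_2)}}$ and $\chi_1\equiv\eta_1\pmod{\pi^{n-m(\eta_1,\eta_2)}}$, and similarly for the second components. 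Writing $m=m(\chi_1,\chi_2)$ and $m'=m(\eta_1,\eta_2)$, pick $g$ with $v_\pi(\chi_1(g)-\chi_2(g))=m$. Then
\[v_\pi(\eta_1(g)-\eta_2(g))\ge\min\big(v_\pi(\eta_1(g)-\chi_1(g)),\,v_\pi(\chi_1(g)-\chi_2(g)),\,v_\pi(\chi_2(g)-\eta_2(g))\big)=m,\]
using $n-m>m$ for the two outer terms, so $m'\ge m$; the reverse inequality $m\ge m'$ follows by the symmetric computation with a $g$ realising $m'$. Hence $m=m'$.

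I do not expect a real obstacle here: the corollary is a bookkeeping consequence of Lemma~\ref{lem:reordering} and its trailing remark. The only point requiring a little care is making sure the two applications of Lemma~\ref{lem:reordering} are compatibly ordered, i.e.\ that when we say "up to reordering" for each base pair we end up comparing $\chi_1$ with the same $\eta_i$ in both; this is automatic once $m<n/2$ because then $\chi_1\not\equiv\chi_2\pmod{\pi^{n-m}}$, so at most one of $\eta_1,\eta_2$ can be congruent to $\chi_1$ modulo $\pi^{n-m}$, which pins down the reordering. If one prefers to avoid even that, part $(ii)$ can be proved directly from the $g$-realizing-$m$ argument above without invoking the "up to reordering" clause, simply by using that each $\eta_i(g)$ is within valuation $n-m$ of one of $\chi_1(g),\chi_2(g)$.
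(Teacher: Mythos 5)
Your overall approach matches the paper's implicit one: the corollary is meant to be read directly off Lemma~\ref{lem:reordering} and the remark following it, and you do that correctly for part~$(i)$ and for reducing part~$(ii)$ to the case $m(\eta_1,\eta_2)<n/2$. However, in the final ultrametric computation for part~$(ii)$ you have the two inequalities attached to the wrong choices of $g$. Picking $g$ with $v_\pi(\chi_1(g)-\chi_2(g))=m$ and using $\eta_i\equiv\chi_i\pmod{\pi^{n-m}}$ (with $n-m>m$) gives $v_\pi(\eta_1(g)-\eta_2(g))=m$ \emph{exactly} (the middle term is strictly the unique minimum), and a single such $g$ bounds the minimum from above: this is the $m(\eta_1,\eta_2)\le m$ direction, not $m(\eta_1,\eta_2)\ge m$ as you wrote. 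The inequality $m(\eta_1,\eta_2)\ge m$ instead requires $v_\pi(\eta_1(g)-\eta_2(g))\ge m$ for \emph{every} $g$, which follows from the same congruences without selecting a special element. Likewise the ``symmetric computation with $g$ realising $m'$'' produces $m\le m'$, not $m\ge m'$. Both inequalities are present in your argument, so the conclusion $m(\chi_1,\chi_2)=m(\eta_1,\eta_2)$ is correct; the labels just need to be swapped. Your closing remark about the ordering being pinned down by $\chi_1\not\equiv\chi_2\pmod{\pi^{n-m}}$ is a nice observation and is exactly what makes the single application of Lemma~\ref{lem:reordering} suffice.
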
 

\begin{proof}
    If $m(\chi_1,\chi_2)\ge n/2$, then by \Cref{lem:reordering}, $\chi_i \equiv \eta_i\pmod{\pi^{\lceil\frac n2\rceil}}$ for each $i$. Hence,
    \[\eta_1 \equiv \chi_1\equiv\chi_2\equiv\eta_2\pmod{\pi^{\lceil\frac n2\rceil}}, \]
    so $m(\eta_1, \eta_2)\ge n/2$.

    If $m(\chi_1, \chi_2) < n/2$, then by \Cref{lem:reordering},
    \[\eta_i \equiv \chi_i\pmod{\pi^{n-m(\chi_1, \chi_2)}}\]
    for each $i$. Since $n-m(\chi_1, \chi_2) > m(\chi_1, \chi_2)$, it follows that $m(\chi_1, \chi_2) = m(\eta_1, \eta_2)$.
\end{proof}

\begin{remark}\label{rem:s-independent}
        In particular, the integer $s$ in the statement of \Cref{thm:ribet-intro} depends only on $\rho$ and $n$, and is independent of the choice of characters $\chi_1, \chi_2$.
\end{remark}

\subsection{The invariants \texorpdfstring{$n(\rho)$ and $m(\rho)$}{n and m}}
We recall the invariants $n(\rho)$ and $m(\rho)$ defined in \cite{Bellaiche-arbres}*{\S 4.2}.

\begin{defn}
Let $n(\rho)=\sup \braces{n : \text{there exists a pair of conjugate characters modulo } \pi^n}$.
\end{defn} 

\begin{defn}
If $n(\rho)$ is finite, then we define $m(\rho)=\max m(\chi_1,\chi_2)$,
where the maximum runs over all pairs of conjugate characters modulo $\pi^{n(\rho)}$.
\end{defn} 

We think of $n(\rho)$ as the \emph{index of reducibility} of $\rho$ and of $m(\rho)$ as the \emph{index of reducibility with multiplicity}.

An immediate consequence of \Cref{lem:distance-submod-bijection} is that $d(\rho)\leq n(\rho)$. The main result of \cite{Bellaiche-arbres} is that $d(\rho) \ge \max(n(\rho)/2, n(\rho) - m(\rho))$. Using \Cref{lem:distance-submod-bijection}, an immediate corollary of \Cref{thm:main-intro} is that $d(\rho) = n(\rho)$.
    
\begin{remark}\label{rem:n(rho)_vs_n(rho(G))}
    Define $n(\rho(G))$ to be the largest integer $n$ for which there exists a pair of conjugate characters $(\chi_1, \chi_2)$ modulo $\pi^{n}$, such that $\chi_1, \chi_2$ factor through $\rho(G)$. Then it is clear that $n(\rho(G))\le n(\rho)$. However, in general there do exist pairs of conjugate characters that do not factor through the image of $\rho$, so a priori, it is not clear that these invariants should be equal. However, it follows from \Cref{thm:main-intro} that indeed $n(\rho(G)) = n(\rho) = d(\rho)$. Note that in \cite{Bellaiche-arbres}, the authors only consider $n(\rho(G))$.
\end{remark}

\subsection{Boundary points}

\begin{defn}\label{defn_boundary-points}
Let $U\sub \X$ be a subset of the tree $\X$.
We say that a vertex $x\in U$ is an \emph{interior point} of $U$ if $B(x, 1)\sub U$. Otherwise, we say that $x$ is a \emph{boundary point} of $U$.
\end{defn} 

\begin{exmp}
    Suppose that $U=B(S,r)$ is a band with nerve $S$ and radius $r$.
    If $r=0$, then $U$ is a line segment and any vertex $x\in U$ is a boundary point.
    If $r>0$, the boundary points of $U$ are exactly its leaves.
\end{exmp} 

The following lemma is a slight generalisation of \cite{Bellaiche-arbres}*{Lem.\ 36}:

\begin{lem}\label{lem_existence_of_basis}
Let $g\in \M_{2}(\O{K})$ and let $x$ be a boundary point of $\X(\{g\})$.
Assume that the characteristic polynomial $P_g(t)$ of $g$ factors as
\[P_g(t)\equiv (t-\alpha) (t-\beta)\pmod{\pi^n},\]
for some $\alpha,\beta \in \O{K}/\pi^n\O{K}$.
Then there exists a basis $(v_1,v_2)$ of $\Lambda_x/\pi^n\Lambda_x$ with respect to which $g\pmod{\pi^n}$ is represented by the matrix
\[\twomat{\alpha}{1}{0}{\beta}.\]
\end{lem}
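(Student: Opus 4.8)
The plan is to reduce everything to a statement about a single matrix $g \in \M_2(\O{K}/\pi^n\O{K})$ and to exploit the hypothesis that $x$ is a \emph{boundary} point of $\X(\{g\})$. Since $x$ is $\{g\}$-stable, $g$ acts on $\Lambda_x/\pi^n\Lambda_x \cong (\O{K}/\pi^n\O{K})^2$; write $\bar g$ for this action. Because $x$ is a boundary point, $B(x,1)\not\sub\X(\{g\})$, so by \Cref{lem:distance-submod-bijection} there is a free rank $1$ submodule $L_0 = (\O{K}/\pi\O{K})\bar v_1 \subset \Lambda_x/\pi\Lambda_x$ that is \emph{not} $\bar g$-stable; equivalently, lifting to mod $\pi^n$, there is a primitive vector $v_1\in\Lambda_x/\pi^n\Lambda_x$ (i.e.\ $v_1\not\in\pi\cdot\Lambda_x/\pi^n\Lambda_x$) whose line is not preserved modulo $\pi$. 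Complete $v_1$ to a basis $(v_1,w)$ of $\Lambda_x/\pi^n\Lambda_x$. In this basis $\bar g = \twomat{a}{b}{c}{d}$ with $c\in(\O{K}/\pi^n\O{K})\t$ a unit, precisely because the line through $v_1$ is not $\bar g$-stable mod $\pi$.

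Now I would rescale the second basis vector to clear $c$ to $1$: replacing $w$ by $c^{-1}w$ (legitimate since $c$ is a unit) puts $\bar g$ in the form $\twomat{a}{b'}{1}{d}$ for some $b', a, d$. The characteristic polynomial is unchanged by conjugation, so $t^2 - (a+d)t + (ad - b') \equiv (t-\alpha)(t-\beta) \pmod{\pi^n}$, giving $a + d \equiv \alpha+\beta$ and $ad - b' \equiv \alpha\beta$. It remains to conjugate $\twomat{a}{b'}{1}{d}$ to $\twomat{\alpha}{1}{0}{\beta}$ by a matrix in $\GL_2(\O{K}/\pi^n\O{K})$. I expect to use a unipotent conjugation $\twomat{1}{0}{u}{1}$ (or $\twomat{1}{t}{0}{1}$) together with the constraints from the trace and determinant: conjugating by $\twomat{1}{0}{-a+\alpha}{1}$ say — one solves for the entry that makes the bottom-left become $0$ and the top-left become $\alpha$ simultaneously. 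Concretely, the companion-type matrix $\twomat{a}{b'}{1}{d}$ is conjugate to the companion matrix of its characteristic polynomial, and the companion matrix $\twomat{0}{-\alpha\beta}{1}{\alpha+\beta}$ is in turn conjugate (over any commutative ring, by an explicit change of basis using the eigenvector-type relations) to the upper-triangular form $\twomat{\alpha}{1}{0}{\beta}$ whenever the characteristic polynomial splits as $(t-\alpha)(t-\beta)$. I would check the latter directly: the vector $e_1' = $ (first basis vector of the companion presentation) and $e_2'$ chosen so that $(\bar g - \alpha)e_2' = e_1'$ and $(\bar g-\beta)e_1' = 0$ form the desired basis — these exist over $\O{K}/\pi^n\O{K}$ because $\bar g$ already has $1$ in the lower-left corner, making $e_1'$ automatically primitive, and solving $(\bar g-\alpha)e_2' = e_1'$ is possible since one can take $e_2'$ to be (a unit multiple of) the standard basis vector not killed by $\bar g - \alpha$.

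The main obstacle is the last conjugation step \emph{over the ring} $\O{K}/\pi^n\O{K}$ rather than over a field: one cannot simply diagonalise or appeal to rational canonical form, and one must ensure the change-of-basis matrix is actually invertible mod $\pi^n$, i.e.\ has unit determinant. The key point that rescues this is exactly the boundary hypothesis, which forced the lower-left entry $c$ to be a \emph{unit}; this unit is what guarantees that the relevant intermediate matrix is a genuine companion matrix (cyclic vector available) and that all the vectors produced in the construction remain primitive. Without the boundary assumption $c$ could be a non-unit and the normal form could fail (e.g.\ if $\bar g$ were already diagonal but $\alpha\ne\beta$ mod $\pi$ one could not reach $\twomat{\alpha}{1}{0}{\beta}$). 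So I would organise the write-up as: (1) translate "boundary point" into "$\exists$ non-$\bar g$-stable primitive line mod $\pi$" via \Cref{lem:distance-submod-bijection}; (2) pick a basis exhibiting the unit lower-left entry and rescale to get lower-left entry $1$; (3) perform the explicit unipotent/companion-matrix conjugation over $\O{K}/\pi^n\O{K}$, using the trace and determinant identities, and verify invertibility of the conjugating matrix; (4) lift the resulting basis of $\Lambda_x/\pi^n\Lambda_x$ to conclude.
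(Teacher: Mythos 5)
Your approach is essentially the paper's, though packaged differently. The core steps match: you translate "boundary point" into the existence of a vector $v_1$ whose line mod $\pi$ is not $g$-stable (equivalently, $(v_1, gv_1)$ is a basis mod $\pi$), lift via Nakayama, and then use the factorisation of the characteristic polynomial to produce the desired basis. The paper's write-up short-circuits your companion-matrix detour: rather than rescaling to get $1$ in the lower-left and then solving for a conjugating matrix, it simply takes $w_2 = v_1$ and \emph{defines} $w_1 := (g - \beta)w_2$, which is immediately primitive because $(v_1, gv_1)$ is a basis; the identity $gw_1 = \alpha w_1$ then follows from Cayley--Hamilton applied to the factorisation $P_g(t) \equiv (t-\alpha)(t-\beta) \pmod{\pi^n}$. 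Your description of the final step — "choose $e_2'$ so that $(\bar g - \alpha)e_2' = e_1'$" — is backwards: one should take $e_2'$ to be the cyclic vector already in hand and define $e_1'$ from it, not solve for $e_2'$. A small slip: replacing $w$ by $c^{-1}w$ turns the lower-left entry into $c^2$, not $1$; you want $w \mapsto cw$ instead. Neither affects the overall correctness, and if you carry out your conjugation explicitly you recover exactly the paper's basis $(w_1, w_2) = \bigl((g-\beta)v_1,\ v_1\bigr)$.
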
 
    \begin{proof}
        Recall that, by \Cref{defn_invariant-subtree}, $\X(\{g\})$ is the subtree of $\X$ consisting of homothety classes $x$ of lattices $\Lambda_x$ such that $g\Lambda_x\sub\Lambda_x$. Note that, by \Cref{lem:ball-implies-scalar}, $x$ is an interior point of $\X(\{g\})$ if and only if $g$ acts by a scalar on $\Lambda_x/\pi\Lambda_x$
    
        Suppose that $x$ is a boundary point. Since $g$ does not act by a scalar on $\Lambda_x/\pi\Lambda_x$, there exists an element $\overline{v}\in \Lambda_x/\pi\Lambda_x$ such that $(\overline{v},g \overline{v})$ is a basis of $\Lambda_x/\pi\Lambda_x$.
    By Nakayama's lemma (for the ring $\O{K}$), if $v\in\Lambda_x/\pi^{n}\Lambda_x$ lies above $\overline{v}$, then $(v,g v)$ is basis of $\Lambda_x/\pi^n\Lambda_x$.
    Denote $w_2=v$ and let $w_1\in \Lambda_x/\pi^n\Lambda_x$ be the vector 
    \[w_1=(g-\beta)\cdot w_2.\]
    Clearly, $(w_1,w_2)$ is a basis of $\Lambda_x/\pi^n\Lambda_x$.
    In this basis, $g$ is represented by the matrix 
    \[\twomat{\alpha}{1}{0}{\beta}.\]
    Indeed, 
    \[g\cdot w_2 =  (g-\beta)\cdot w_2+\beta w_2 = w_1+\beta w_2\]
    and, by the Cayley--Hamilton theorem, 
    \[g\cdot w_1 = (g^2 - \beta g)\cdot w_2 = \alpha(g-\beta)\cdot w_2 = \alpha w_1.\]
        \end{proof} 

\begin{cor}\label{cor_d=n_for_a_single_A}
Let $g\in \M_2(\O{K})$, and let $x$ be a boundary point of $\X(\{g\})$.
Let $n$ be a positive integer.
The following are equivalent:
\begin{enumerate}
    \item The characteristic polynomial $P_g(t)$ is reducible modulo $\pi^n$.
    \item There exists a point $y\in\X(\{g\})$ with $d(x,y)=n$.
    \end{enumerate}
\end{cor}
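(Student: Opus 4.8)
The plan is to deduce both implications directly from the two preceding results applied to the singleton $A=\{g\}$: the distance--submodule bijection of \Cref{lem:distance-submod-bijection}, and the upper-triangular-basis statement of \Cref{lem_existence_of_basis}. The hypothesis that $x$ is a \emph{boundary} point of $\X(\{g\})$ will be used only in the direction $(i)\Rightarrow(ii)$, where it is exactly what allows us to invoke \Cref{lem_existence_of_basis}.

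For $(i)\Rightarrow(ii)$: assume $P_g(t)\equiv(t-\alpha)(t-\beta)\pmod{\pi^n}$ with $\alpha,\beta\in\O{K}/\pi^n\O{K}$. By \Cref{lem_existence_of_basis} there is a basis $(v_1,v_2)$ of $\Lambda_x/\pi^n\Lambda_x$ in which $g\pmod{\pi^n}$ is represented by $\twomat{\alpha}{1}{0}{\beta}$. Then $gv_1=\alpha v_1$, so $V:=(\O{K}/\pi^n\O{K})v_1$ is a free, rank $1$, $g$-stable submodule of $\Lambda_x/\pi^n\Lambda_x$. Applying \Cref{lem:distance-submod-bijection} with $A=\{g\}$, the submodule $V$ corresponds to a vertex $y\in\X(\{g\})$ with $d(x,y)=n$, as required.

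For $(ii)\Rightarrow(i)$: given $y\in\X(\{g\})$ with $d(x,y)=n$, \Cref{lem:distance-submod-bijection} (again with $A=\{g\}$) produces a $g$-stable, free, rank $1$ submodule $V\sub\Lambda_x/\pi^n\Lambda_x$; since, by its description there as $\Lambda_y/\pi^n\Lambda_x$, such a submodule is a direct summand, a generator of $V$ extends to a basis of $\Lambda_x/\pi^n\Lambda_x$ in which $g$ is upper triangular, with diagonal entries $\alpha,\beta$ say. Hence $P_g(t)\equiv(t-\alpha)(t-\beta)\pmod{\pi^n}$, which is the reducibility claim. (Equivalently, one can unwind the proof of \Cref{lem:distance-submod-bijection} directly: writing $g$ in a basis $(v_1,v_2)$ of a representative $\Lambda_x$ for which $(v_1,\pi^n v_2)$ spans a representative of $y$, stability of $\Lambda_y$ forces the lower-left entry of $g$ to be divisible by $\pi^n$, and the characteristic polynomial then factors as the product of the diagonal entries modulo $\pi^n$.)

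I do not anticipate a genuine obstacle: this corollary is precisely the ``single matrix'' (rank-one group) shadow of \Cref{thm:main-intro}, and its only role is to serve as a building block for the general argument. The one point worth spelling out is that the free rank $1$ submodule supplied by \Cref{lem:distance-submod-bijection} is a direct summand of $\Lambda_x/\pi^n\Lambda_x$, so that its generator may be completed to a basis; this is immediate from the explicit form $\Lambda_y/\pi^n\Lambda_x$ given in that lemma.
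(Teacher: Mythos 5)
Your proof is correct and matches the paper's proof essentially line for line: $(i)\Rightarrow(ii)$ via \Cref{lem_existence_of_basis} followed by \Cref{lem:distance-submod-bijection}, and $(ii)\Rightarrow(i)$ by reading $g$ off in a basis adapted to the free rank-$1$ direct summand $\Lambda_y/\pi^n\Lambda_x$. The only difference is that you spell out explicitly that this submodule is a direct summand, a point the paper leaves implicit.
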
 

    \begin{proof}
    Assume $(i)$. 
    Let $(v_1,v_2)$ be a basis of $\Lambda_x/\pi^n\Lambda_x$ as in \Cref{lem_existence_of_basis}.
    Then $v_1$ spans a $g$-invariant rank one free submodule of $\Lambda_x/\pi^n\Lambda_x$, which by \Cref{lem:distance-submod-bijection} corresponds to a point $y\in \X(\{g\})$ at distance $n$ from $x$.
    
    Assume $(ii)$.
    Choose $\Lambda_y$ such that $\pi^n\Lambda_x\subsetneq \Lambda_y\subsetneq\Lambda_x$.
    The image of $\Lambda_y$ in $\Lambda_x/\pi^n\Lambda_x$ is a rank one free submodule, stable under the action of $g$. Thus, there exists a basis $(v_1,v_2)$ of $\Lambda_x/\pi^n\Lambda_x$ with respect to which $g$ is upper triangular. Hence, $P_g(t)$ is reducible modulo $\pi^n$.
    \end{proof} 

\section{Proof of Theorem $\ref{thm:main-intro}$}
Recall that $\rho\:G\map \mathrm{GL}_2(\fld{K})$ is a representation, with $G$ a compact group. Let $(\chi_1,\chi_2)$ be a pair of conjugate characters modulo $\pi^n$.
By definition, the characteristic polynomial $P_{\rho(g)}(t)$ of $\rho(g)$ factors as
\[P_{\rho(g)}(t)\equiv (t-\chi_1(g))(t-\chi_2(g))\pmod{\pi^n},\]
for all $g\in G$.

To prove \Cref{thm:main-intro}, instead of working with the group representation $\rho$, we work with the corresponding algebra homomorphism $R\:\O{}[G]\to \M_2(K)$. Write $\O{K}[\rho(G)] \sub \M_2(K)$ for the image of $R$. Then $\O{K}[\rho(G)]$ is the $\O{K}$-algebra spanned by the image of $G$ in $\GL_2(K)\sub\M_2(K)$. 

\begin{lem}\label{lem_extending_to_linear_span}
Let $g,h\in G$, let $a,b\in \O{K}$ and let $f=a \rho(g)+b \rho(h)\in \M_2(K)$.
Then, modulo $\pi^n$, the characteristic polynomial $P_f(t)$ of $f$ factors as
    \[
    P_f(t)\equiv \big(t-\sog{a \chi_1(g)+b\chi_1(h)}\big) \big(t-\sog{a \chi_2(g)+b\chi_2(h)}\big)\pmod{\pi^n}.  
    \]
\end{lem}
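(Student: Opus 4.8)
The plan is to reduce the statement about an arbitrary $\O{K}$-linear combination $f = a\rho(g) + b\rho(h)$ to the already-established factorization property for the elements $\rho(g)$ themselves, by working in the quotient algebra $R := \Span_{\O{K}}\rho(G) \otimes_{\O{K}} \O{K}/\pi^n\O{K} \subseteq \M_2(\O{K}/\pi^n\O{K})$ (after conjugating $\rho$ so that it lands in $\GL_2(\O{K})$ via a $G$-stable lattice). The key observation is that the characteristic polynomial of a $2\times 2$ matrix $f$ is $t^2 - \trace(f)\, t + \det(f)$, and both $\trace$ and $\det$ can be expressed in terms of traces alone: $\trace(f)$ is linear, and $2\det(f) = \trace(f)^2 - \trace(f^2)$, which is legitimate since the residue characteristic is not $2$. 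So the claim is equivalent to the two identities
\[
\trace(f) \equiv \big(a\chi_1(g) + b\chi_1(h)\big) + \big(a\chi_2(g) + b\chi_2(h)\big) \pmod{\pi^n}
\]
and the corresponding one for $\det(f)$, i.e.\ that the function $T(g) := \chi_1(g) + \chi_2(g)$ extends to an $\O{K}/\pi^n$-linear functional on $R$ whose associated "determinant" behaves multiplicatively in the required sense.

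First I would handle the trace identity, which is immediate: $\trace$ is $\O{K}$-linear, and by hypothesis $\trace\rho(g) \equiv \chi_1(g) + \chi_2(g)$ and $\trace\rho(h) \equiv \chi_1(h) + \chi_2(h)$ modulo $\pi^n$, so
\[
\trace(f) = a\,\trace\rho(g) + b\,\trace\rho(h) \equiv a(\chi_1(g)+\chi_2(g)) + b(\chi_1(h)+\chi_2(h)) \pmod{\pi^n},
\]
which rearranges to the displayed form. For the determinant, I would compute $2\det(f) = \trace(f)^2 - \trace(f^2)$, expand $f^2 = a^2\rho(g)^2 + ab(\rho(g)\rho(h) + \rho(h)\rho(g)) + b^2\rho(h)^2$, and evaluate $\trace(f^2)$ term by term. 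The terms $\trace\rho(g)^2$, $\trace\rho(h)^2$ are controlled since $\rho(g)^2 = \rho(g^2)$ and $\rho(h)^2 = \rho(h^2)$, giving $\trace\rho(g^2) \equiv \chi_1(g)^2 + \chi_2(g)^2$; the cross term $\trace(\rho(g)\rho(h) + \rho(h)\rho(g)) = 2\trace\rho(gh)$ (using $\trace(AB)=\trace(BA)$ and $\rho(g)\rho(h) = \rho(gh)$) equals $2(\chi_1(gh) + \chi_2(gh)) = 2(\chi_1(g)\chi_1(h) + \chi_2(g)\chi_2(h))$ modulo $\pi^n$, since $\chi_i$ are characters. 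Substituting everything into $2\det(f) = \trace(f)^2 - \trace(f^2)$ and simplifying should collapse — after cancelling the factor of $2$, which is invertible in $\O{K}$ — to exactly $2\big(a\chi_1(g)+b\chi_1(h)\big)\big(a\chi_2(g)+b\chi_2(h)\big)$, i.e.\ the product of the two proposed roots. Then $P_f(t) = t^2 - \trace(f)t + \det(f)$ factors as claimed.

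The main obstacle is purely bookkeeping: one must be careful that all the intermediate quantities live in $\O{K}$ (so that the congruences modulo $\pi^n$ make sense and division by $2$ is harmless), which holds because $\rho$ has been conjugated into $\GL_2(\O{K})$ and $a, b \in \O{K}$. There is no deep structural difficulty — the content is just the elementary fact that for $2\times 2$ matrices the characteristic polynomial is determined by the traces of the matrix and its square, combined with the multiplicativity of the characters $\chi_i$ and the relation $\rho(g)\rho(h) = \rho(gh)$. I would present it by first reducing to the trace and determinant identities, then verifying each.
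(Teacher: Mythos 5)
Your argument is correct and reaches the same two target congruences, for $\trace(f)$ and $\det(f)$, as the paper; the trace part is identical. For the determinant, you work with the quadratic form $A\mapsto 2\det(A)=\trace(A)^2-\trace(A^2)$ applied directly to $f$ and expand $\trace(f^2)$ via $\rho(g^2),\rho(h^2),\rho(gh)$, then divide by $2$ at the end. The paper instead applies the polarised (bilinear) form of the same identity, $\det(A+B)-\det(A)-\det(B)=\trace(A)\trace(B)-\trace(AB)$, with $A=a\rho(g)$, $B=b\rho(h)$, and uses the determinant congruences $\det\rho(g)\equiv\chi_1(g)\chi_2(g)$ directly. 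The two routes are algebraically equivalent (as the paper's own remark about the polarisation identity makes clear), and both collapse to $\big(a\chi_1(g)+b\chi_1(h)\big)\big(a\chi_2(g)+b\chi_2(h)\big)$; the only real difference is that your version derives everything from the trace congruences alone, at the cost of an explicit division by $2$, whereas the paper's form avoids that division in this step (though $2$ is invertible throughout, so nothing is lost). Both are valid.
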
 
    \begin{proof}
    We need to show that
        \begin{equation}\label{equation_trace}
        \trace(f)\equiv \sog{a \chi_1(g)+b\chi_1(h)}+\sog{a \chi_2(g)+b\chi_2(h)}\pmod{\pi^n}
        \end{equation} 
    and
        \begin{equation}\label{equation_determinant}
        \det(f)\equiv \sog{a \chi_1(g)+b\chi_1(h)}\sog{a \chi_2(g)+b\chi_2(h)}\pmod{\pi^n}.  
        \end{equation} 
    Identity $(\ref{equation_trace})$ follows from the linearity of the trace and the fact that $\trace(\rho(g)) \equiv \chi_1(g)+\chi_2(g)\pmod{\pi^n}$.
    Identity $(\ref{equation_determinant})$ follows from the identity
        \begin{equation}\label{equation_polarisation_identity}
        \det(A+B)-\det(A)-\det(B)=\trace(A)\cdot \trace(B)-\trace(A B),       
        \end{equation} 
    where $A,B\in \M_2(K)$, by taking $A=a\rho(g)$ and $B=b\rho(h)$.
    Indeed, we have
        \begin{align*}
        \det(f)
        &=\det(a\rho(g)+b\rho(h))\\
        &=\det(a\rho(g))+\det(b\rho(h))+\trace(a\rho(g))\cdot\trace(b\rho(h))-\trace((a\rho(g))\cdot (b\rho(h)))\\
        &=a^2\det(\rho(g))+b^2\det(\rho(h))+ab \trace(\rho(g)) \trace(\rho(h))-ab\trace(\rho(g)\rho(h))\\
        &\equiv a^2\chi_1(g)\chi_2(g)+b^2\chi_1(h)\chi_2(h)\\
        &\qquad +ab\big((\chi_1(g)+\chi_2(g))(\chi_1(h)+\chi_2(h))-(\chi_1(gh)+\chi_2(gh))\big)\pmod{\pi^n}\\
        &\equiv a^2 \chi_1(g)\chi_2(g)+b^2\chi_1(h)\chi_2(h)+ab\big(\chi_1(g)\chi_2(h)+\chi_1(h)\chi_2(g)\big)\pmod{\pi^n}\\
        &\equiv \sog{a\chi_1(g)+b\chi_1(h)}\sog{a\chi_2(g)+b\chi_2(h)}\pmod{\pi^n}.
        \end{align*} 
    \end{proof} 
    
    \begin{remark}
        The identity $(\ref{equation_polarisation_identity})$ is a polarisation identity. Indeed, consider the symmetric bilinear form $\langle A,B\rangle=\trace(A)\cdot\trace(B)-\trace(AB)$ on $\M_2(\fld{K})$.
        The associated quadratic form $q(A)=\langle A,A\rangle$ is equal to $2\det(A)$.
        As usual, one recovers a symmetric bilinear from its associated quadratic form by  $\langle A,B\rangle=\frac12(q(A+B)-q(A)-q(B))$, which is exactly identity $(\ref{equation_polarisation_identity})$.
    \end{remark} 

    Using the same calculation and induction, we deduce the following corollary:

    \begin{cor}\label{cor:extending-to-linear-span}
        Let $R\:\O{}[G]\to \M_2(K)$ be the algebra homomorphism corresponding to $\rho\:G\to\GL_2(K)$. Suppose that there is an integer $n\ge 1$ and $\O{}$-algebra homomorphisms $\chi_1, \chi_2\:\O{}[G]\to \O{}/\pi^n\O{}$ such that the characteristic polynomial $P_{\rho(g)}(t)$ of $\rho(g)$ factors as 
        \[P_{\rho(g)}(t) \equiv (t-\chi_1(g))(t-\chi_2(g))\pmod{\pi^n}\]
        for all $g\in G$. Then
        \[P_{R(g)}(t) \equiv (t-\chi_1(g))(t-\chi_2(g))\pmod{\pi^n}\]
        for all $g\in\O{}[G]$.
    \end{cor}

    We can now prove \Cref{thm:main-intro}:
    
    \begin{proof}[Proof of Theorem $\ref{thm:main-intro}$]
        If $\X(\rho) = \X$, then $\rho(G)$ contains only scalar matrices and the claim is easy.
        Hence, we may assume that $\X(\rho)$ has boundary points in the sense of \Cref{defn_boundary-points}.
                
        The fact that $(i)$ implies $(ii)$ follows immediately from \Cref{lem:distance-submod-bijection}. Indeed, if $x,y\in\X(\rho)$ are two points with $d(x,y)=n$, then there is a lattice $\Lambda_x$ such that $\Lambda_x/\pi^n\Lambda_x$ contains a free, rank one, $G$-stable $\O{K}/\pi^n\O{K}$-submodule. Let $\chi_1\:G\to (\O{K}/\pi^n\O{K})\t$ be the associated representation, and let $\chi_2$ be the quotient of $\rho_{\Lambda_x}\pmod{\pi^n}$ by $\chi_1$. Then $(\chi_1,\chi_2)$ is a pair of conjugate characters modulo $\pi^n$.
        
        Assume $(ii)$.
        Let $(\chi_1,\chi_2)$ be a pair of conjugate characters modulo $\pi^n$.
        Let $g_0\in G$ be an element for which $v_\pi(\chi_2(g_0)-\chi_1(g_0))$ is minimal.
        Here, $v_\pi$ is the truncated valuation on $\O{K}/\pi^n\O{K}$, taking values $0,1,\ldots,n$.

        We proceed differently in two cases according to whether $\X(\{\rho(g_0)\})=\X(\rho)$ or not.
        If $\X(\{\rho(g_0)\})=\X(\rho)$, then, by \Cref{cor_d=n_for_a_single_A} there exist $x,y\in\X(\{\rho(g_0)\})=\X(\rho)$ with $d(x,y)=n$, and we are done.
    
        Assume that $\X(\{\rho(g_0)\})\supsetneq \X(\rho)$.
        Then there exist two neighbours $x,y\in \X$ such that $x\in \X(\rho)$ and $y\in\X(\{\rho(g_0)\}) \setminus\X(\rho)$. In particular, there exists an element $g_1\in G$ such that $y\notin\X(\{\rho(g_1)\})$. By the choice of $g_0$, there exists an element $a\in\O{K}$ such that 
        \[\chi_2(g_1)-\chi_1(g_1)=a\sog{\chi_2(g_0)-\chi_1(g_0)}.\]
        Then
        \[\chi_2(g_1)-a\cdot\chi_2 (g_0)=\chi_1(g_1)-a\cdot\chi_1(g_0).\]
        Let $f=\rho(g_1)-a\cdot \rho(g_0)$, which is an element of $\O{K}[\rho(G)]$, and let $\delta=\chi_1(g_1)-a\cdot\chi_1(g_0)$.
        Then, by \Cref{lem_extending_to_linear_span},
        \[P_f(t)\equiv (t-\delta)^2\pmod{\pi^n}.\]
        By construction, the point $x$ is a boundary point of $\X(\{f\})$.
        Hence, by \Cref{lem_existence_of_basis}, there exists a basis $(v_1,v_2)$ of $\Lambda_x/\pi^n\Lambda_x$ with respect to which $f$ is represented by the matrix
        \[\twomat{\delta}{1}{0}{\delta}.\]
        Now, let $g\in G$ be any element and let 
        \[\twomat{a_g}{b_g}{c_g}{d_g}\in\GL_2(\O{K}/\pi^n\O{K})\]
        be the matrix that represents $\rho(g)$ with respect to the basis $(v_1,v_2)$.
        We will show that $c_g=0$ by computing $\trace(f\cdot\rho(g))\pmod{\pi^n}$ in two different ways.
        On the one hand,
            \begin{align*}
            \trace(f\cdot \rho(g))
            \equiv\trace\sog{\twomat{\delta}{1}{0}{\delta}\cdot\twomat{a_g}{b_g}{c_g}{d_g}}
            \equiv c_g+\delta\cdot \trace(\rho(g))\pmod{\pi^n}.
            \end{align*} 
       On the other hand,
            \begin{align*}
            \trace(f\cdot\rho(g))
            &=\trace((\rho(g_1)-a\cdot \rho(g_0))\cdot \rho(g))\\
            &=\trace(\rho(g_1g))-a\trace(\rho(g_0g))\\
            &\equiv \chi_1(g_1g)+\chi_2(g_1g)-a\cdot(\chi_1(g_0g)+\chi_2(g_0g))\pmod{\pi^n}\\
            &\equiv \sog{\chi_1(g_1)-a\cdot\chi_1(g_0)}\cdot\chi_1(g)+\sog{\chi_2(g_1)-a\cdot\chi_2(g_0)}\cdot\chi_2(g)\pmod{\pi^n}\\
            &\equiv \delta\cdot \chi_1(g)+\delta\cdot \chi_2(g)\pmod{\pi^n}\\
            &\equiv \delta\cdot\trace(\rho(g))\pmod{\pi^n}.
            \end{align*} 
        Combining the two computations, we see that $c_g\equiv 0\pmod{\pi^n}$ for all $g\in G$. Thus, the element $v_1$ generates a free $\rho(G)$-stable $\O{K}/\pi^n\O{K}$-submodule of $\Lambda_x/\pi^n\Lambda_x$.
        Hence, by \Cref{lem:distance-submod-bijection}, there exists a point $z\in \X(\rho)$ with $d(x, z) = n$.
        \end{proof}

    \begin{remark}
    In \Cref{thm:main-intro}, we can replace the assumption that $G$ is a group with the assumption that $G$ is a set with multiplication, and $\rho\:G\map \M_{2}(K)$ is a multiplicative map.
    Indeed, the notion of conjugate characters and the definition of $\X(\rho)$ apply as they are to the more general case, and the proof of \Cref{thm:main-intro} only uses the fact that $G$ is closed under multiplication.
    \end{remark} 

    The same arguments as above prove the following slightly more general statement in terms of $\O{}$-algebras:

        \begin{thm}\label{thm:main-alg}
        Let $A$ be an $\O{}$-algebra and let $R\:A\to \M_2(K)$ be an $\O{}$-algebra homomorphism. Assume that $A$ stabilises at least one lattice $\Lambda\sub K^2$.  Suppose that there is an integer $n\ge 1$ and $\O{}$-algebra homomorphisms $\chi_1, \chi_2\:A\to \O{}/\pi^n\O{}$ such that the characteristic polynomial $P_{R(g)}(t)$ of $R(g)$ factors as 
        \[P_{R(g)}(t) \equiv (t-\chi_1(g))(t-\chi_2(g))\pmod{\pi^n}\]
        for all $g\in A$. Then there exist $A$-stable lattices $\Lambda' \sub \Lambda\sub K^2$ such that $\Lambda'/\pi^n\Lambda\sub\Lambda/\pi^n\Lambda$ is a free $\O{}/\pi^n\O{}$-module of rank one.
    \end{thm}
    
\section{A generalisation of Ribet's Lemma}
\subsection{Proof of Theorem $\ref{thm:ribet-intro}$}
    Let $s=s(\chi_1,\chi_2)$. Since $\rho$ is irreducible, by \Cref{rem:bounded-irred}, the diameter $d(\rho)$ of $\X(\rho)$ is finite, and by \Cref{thm:main-intro} and \Cref{lem:distance-submod-bijection}, $d(\rho) = n(\rho)$. Let $x,y\in \X(\rho)$ be two vertices with $d(x, y) = n(\rho)$. 
    Then both $x$ and $y$ are leaves of $\X(\rho)$. 
    Choose lattices $\Lambda_x,\Lambda_y$ representing $x, y$ such that 
    \[\pi^{n(\rho)}\Lambda_x\subsetneq\Lambda_y\subsetneq\Lambda_x.\]
    By \Cref{lem:distance-submod-bijection}, $\Lambda_y/\pi^{n(\rho)}\Lambda_x$ is a free, rank 1, $G$-stable submodule of $\Lambda_x/\pi^{n(\rho)}\Lambda_x$, so $G$ acts on it by a character $\eta_1$.
    Similarly, $\pi^{n(\rho)}\Lambda_x/\pi^{n(\rho)}\Lambda_y$ is a $G$-stable, rank 1, free submodule of $\Lambda_y/\pi^{n(\rho)}\Lambda_y$, so $G$ acts on it by a character $\eta_2$.
    We see that $\Lambda_x/\pi^{n(\rho)}\Lambda_x$ is an extension of $\eta_2$ by $\eta_1$, and $\Lambda_y/\pi^{n(\rho)}\Lambda_y$ is an extension of $\eta_1$ by $\eta_2$.
    Both extensions are residually non-split because $\Lambda_x/\pi\Lambda_x$ and $\Lambda_y/\pi\Lambda_y$ are indecomposable, by \Cref{lem:distance-submod-bijection}.
    
    Denote by $\overline{\eta}_1$ and $\overline{\eta}_2$ the reductions of $\eta_1$ and $\eta_2$ modulo $\pi^s$.
    Then $\Lambda_x/\pi^{s}\Lambda_x$ is a residually non-split extension of $\overline{\eta}_2$ by $\overline{\eta}_1$ and $\Lambda_y/\pi^{s}\Lambda_y$ is a residually non-split extension of $\overline{\eta}_1$ by $\overline{\eta}_2$.
    By \Cref{lem:reordering} we can reorder $\eta_1,\eta_2$ so that $\chi_i\pmod{\pi^s}=\overline{\eta}_i$.\qed

\subsection{Linearly extendable characters and a counterexample}\label{sec:counterexample}

The version of Ribet's Lemma we give in \Cref{thm:ribet-intro} is optimal: if $(\chi_1, \chi_2)$ is a pair of conjugate characters modulo $\pi^n$, then it is not true in general that there exists a homothety class $x\in \X(\rho)$ such that $\Lambda_x/\pi^t\Lambda_x$ is a non-split extension of $\chi_1$ by $\chi_2$ when $t>s$. 
When $K = \Q_2$, one can already find counterexamples arising from elliptic curves over $\Q$.

\begin{exmp}
    Consider the elliptic curve $E=X_0(24)\: y^2 = x^3 -x^2 -4x + 4$, with LMFDB label 24.a4. Then $E[2](\Q) = \Z/4\Z \times \Z/2\Z$. If $\rho\:\Ga\Q\to \GL_2(\Q_2)$ is the $2$-adic Galois representation attached to $E$, then for all $g\in \Ga\Q$, we have $P_{\rho(g)}(t)\equiv(t- 1)(t-\chi_{cyc}(g))\pmod{8}$, where $\chi_{cyc}$ is the mod $8$ cyclotomic character. In this example, we have $n(\rho) = 3$, $m(\rho) = 1$, and $s(1, \chi_{cyc}) = 2$. 

    By \Cref{thm:ribet-intro}, there is a lattice for $\rho$ such that $\Lambda/4\Lambda$ is a non-split extension of $\chi_{cyc}$ by $1$ modulo $4$. Now, $\X(\rho)$ is exactly the $2$-power isogeny graph of $E$. The lattice $\Lambda$ should correspond to an isogenous elliptic curve $E'$ such that $E'[2](\Q) = \Z/4\Z$. One such choice is $E'\:y^2 = x^3- x^2 + x$, with LMFDB label 24.a5.
    
    Since $E$ is not $\Q$-isogenous to any elliptic curve with an $8$-torsion point, there is no stable lattice $\Lambda$ for $\rho$ such that $\Lambda/8\Lambda$ contains the trivial representation. Hence, \Cref{thm:ribet-intro} is optimal in this example. See \cite{Chiloyan}*{Table 2} for further counterexamples arising from elliptic curves over $\Q$.
\end{exmp}

More generally, when $K$ is an arbitrary complete valued field, there are two evident obstructions. 
First, if such an $x\in \X(\rho)$ exists, then $\chi_1$ and $\chi_2$ must factor through the image of $\rho$.
Second, the characters $\chi_1,\chi_2$ must extend $\O{K}$-linearly to $\O{K}[\rho(G)]$. Indeed, if there is a lattice $\Lambda_x$ such that $\Lambda_x/\pi^t\Lambda_x$ is a non-split extension of $\chi_1$ by $\chi_2$, then we can choose a basis $(v_1,v_2)$ of $\Lambda_x$ such that any $g\in G$ is represented by a matrix
\[\twomat{\widetilde{\chi}_1(g)}{b_g}{\pi^tc_g}{\widetilde{\chi}_2(g)},\]
where $\widetilde{\chi}_1(g),\widetilde{\chi}_2(g)$ are some lifts of $\chi_1(g),\chi_2(g)$ to $\O{K}$. Then, with respect to this basis, any element of $\O{K}[\rho(G)]$ is represented by a matrix in 
\[\M_0(\pi^t):=\set{\twomat{a}{b}{c}{d} : a,b,c,d\in\O{K},\ \pi^t\mid c}.\]
Since the maps $\M_0(\pi^t)\to \O{K}\pmod{\pi^t}$ given by $\br{\begin{smallmatrix}a & b\\c\pi^t&d\end{smallmatrix}}\mapsto a, d\pmod{\pi^t}$ are algebra homomorphisms, we see that $\chi_1, \chi_2$ extend $\O{K}$-linearly to  $\O{K}[\rho(G)]$.

\begin{defn}
Let $\chi$ be a character modulo $\pi^n$ of $G$.
We say that $\chi$ is linearly extendable with respect to $\rho$, or simply \emph{linearly extendable}, if $\chi$ factors through the image of $\rho$ and extends linearly to a map on $\O{K}[\rho(G)]$.
\end{defn} 

The character $\chi$ is linearly extendable if and only if for each $g_1,\ldots,g_k\in G$ and $a_1,\ldots,a_k\in \O{K}$ such that 
\[\sum_{i=1}^ka_i\rho(g_i)=0,\]
we have
\[\sum_{i=1}^ka_i\chi(g_i)=0.\]

Using these obstructions, we prove that \Cref{thm:ribet-intro} is best possible:

\begin{prop}
Fix integers $n, s$ such that $n\ge 1$ and $\frac n2\le s \le n$. Then there exists a group $G$, a representation $\rho\:G\map \GL_2(\O{K})$ with $n(\rho) = n$ and a pair of conjugate characters $(\chi_1,\chi_2)$ modulo $\pi^n$ with $s(\chi_1,\chi_2)  = s$ such that for all $t>s(\chi_1,\chi_2)$, there does not exist a vertex $x\in \X(\rho)$ such that $\Lambda_x/\pi^t\Lambda_x$ is a non-split extension of $\chi_1$ by $\chi_2$ or of $\chi_2$ by $\chi_1$.
\end{prop}

\begin{proof}
    Let 
    \[m = \begin{cases}
        n & s = \lceil \frac n2 \rceil\\
        n-s &\text{otherwise} 
    \end{cases}\]
    and let $G$ be the group
    \[G=\braces*{\twomat{a}{b}{\pi^nc}{d}\in\GL_2(\O{K})\ \big|\ a,b,c,d\in\O{K},\ a\equiv d\equiv 1\pmod{\pi^m}}\]
    and let $\rho\:G\hookrightarrow \GL_2(\O{K})$.
    By \cite{Lubotzky-Mann-powerful-p-groups}, any closed subgroup of $\GL_2(\O{K})$ is topologically finitely generated, so $G$ is topologically finitely generated.
    By the classification of topologically finitely generated abelian profinite groups, the abelianisation $G^{ab}$ of $G$ is isomorphic to $\Z_p^r\times T$ where $T$ is a finite group.
    
    We make the assumption that $r\geq 5$, which can always be achieved by choosing $\fld{K}$ appropriately. For example, for any prime $p$, we can take $K$ to be any degree $5$ extension of $\Qp$. Indeed, $G\supset\O{}\t$, embedded as the diagonal matrices, and $\O{}\t$ contains a finite index subgroup isomorphic to $\O{}^+\cong \Z_p^5$ as a $\Zp$-module.
    
    Let $\chi_1,\chi_2$ be the characters
    \[\chi_1\sog{\twomat{a}{b}{\pi^nc}{d}}=a\pmod{\pi^n},\ \ \ \chi_2\sog{\twomat{a}{b}{\pi^nc}{d}}=d\pmod{\pi^n}.\]
    Then $(\chi_1,\chi_2)$ is a pair of conjugate characters of $G$ modulo $\pi^n$.
    We have $n(\rho)=n$. Certainly $m(\chi_1,\chi_2)=m$, so $s(\chi_1, \chi_2) = s$ by definition. 
    The characters $\chi_1,\chi_2$ are evidently linearly extendable.
    We will construct another pair $(\eta_1,\eta_2)$ of conjugate characters of $G$ modulo $\pi^n$ such that $\eta_1\pmod{\pi^t}$ is linearly extendable only when $t\leq s(\eta_1, \eta_2)$. Note that, by \Cref{rem:s-independent}, $s(\eta_1, \eta_2) = s$.
    
    Let $g_1, \ldots, g_r, \ldots g_k\in G$ be the pre-images of a minimal set of topological generators for the abelianisation $G^{ab}\simeq \Z_p^r\times T$, such that the images of $g_1, \ldots, g_r$ generate $\Z_p^r$.  By assumption, $r\ge 5$, and $\M_2(\fld{K})$ has dimension $4$. Therefore, there exist elements $a_1, \ldots, a_r\in K$, not all zero, such that $\sum_{i = 1}^ra_ig_i = 0$ in $\M_2(\fld{K})$, and, up to scaling and reordering, we may assume that $a_i\in \O{K}$ for all $i$ and that $a_1 = 1$.
    
    Let $R_n$ denote the additive group of $\O{K}/\pi^n\O{K}$.
    Let $\gamma\in \pi^sR_n\setminus\pi^{s+1}R_n$.
    Let $\eps\:G\map R_n$ be the unique group homomorphism that satisfies
    \[\eps(g_1)=\chi_1(g_1)^{-1}\gamma,\ \ \ \eps(g_i)=0 \text{ for }i =2, \ldots, k.\]
    The image of $\eps$ also lies in $\pi^sR_n$, but not in $\pi^{s+1}R_n$.

    Define $\eta_1(g)=\chi_1(g)(1+\eps(g))$ and $\eta_2(g)=\chi_2(g)(1-\eps(g))$.
    A simple computation, based on the fact that $\eps(g)\eps(h)=0$, shows that $\eta_1$ and $\eta_2$ are characters.
    Next, since $\chi_1\eps=\chi_2\eps$ (recall that $2s\geq n$), it is easy to check that $\eta_1+\eta_2=\chi_1+\chi_2$, and since $\epsilon^2 =0$, we have $\eta_1\eta_2 = \chi_1\chi_2$
    Therefore, $\eta_1,\eta_2$ are conjugate characters modulo $\pi^n$.
    
    Finally, we show that $\eta_1\pmod{\pi^t}$ is linearly extendable only if $t\le s$.
    Recall that $\sum_{i=1}^ra_ig_i=0$ where $a_1,\ldots,a_r\in\O{K}$ and $a_1=1$.
    Using the fact that $\chi_1$ is linearly extendable, we see that, if $\eta_1\pmod{\pi^t}$ is linearly extendable, then
        \begin{align*}
        0\equiv\sum_{i=1}^ra_i\eta_1(g_i)
        \equiv\sum_{i=1}^ra_i\sog{\eta_1(g_i)-\chi_1(g_i)}
        \equiv\chi_1(g_1)\eps(g_1)
        \equiv\gamma\pmod{\pi^t}.
        \end{align*} 
    Since $\gamma\in \pi^sR_n\setminus\pi^{s+1}R_n$, we see that $\eta_1\pmod{\pi^t}$ is linearly extendable only if $t\leq s$.
    \end{proof}

\section{The shape of $\mathcal{X}(\rho)$}

In this section, we give a complete classification of the shape of $\X(\rho)$ in terms of the invariants $n(\rho)$ and $m(\rho)$. Conversely, given the shape of $\X(\rho)$ and one additional piece of data, we show how to compute $n(\rho)$ and $m(\rho)$. Given reasonable access to the representation $\rho$ (e.g.\ a finite list of topological generators of $\rho(G)$), it is easy to compute $\X(\rho)$, but comparatively difficult to compute $m(\rho)$ and $n(\rho)$.
Throughout this section we assume that the characteristic of the residue field of $K$ is not $2$.

After \cite{Bellaiche-arbres}*{Prop.\ 24}, we may assume, without loss of generality, that $\rho\:G\to\GL_2(K)$ is irreducible. In this case, $\X(\rho)$ is a band whose shape is determined by its radius $r(\rho)$ and its diameter $d(\rho)$ \cite{Bellaiche-arbres}*{Thm.\ 21}.

The following theorem, whose proof will occupy the rest of the paper, completes the classification of \cite{Bellaiche-arbres}*{Thm.\ 45}:

\begin{thm}\label{thm:classification1}
    Suppose that the residue characteristic of $K$ is not $2$. Let $\rho\:G\to\GL_2(K)$ be an irreducible representation. Then $\X(\rho)$ is a band of diameter $d(\rho) = n(\rho)$ and radius
    \[r(\rho)= \begin{cases} m(\rho) &\text{if } m(\rho)\ne n(\rho)\\ 
        \floor{\frac{m(\rho)}2} &\text{if } m(\rho) = n(\rho).
    \end{cases}\]
\end{thm}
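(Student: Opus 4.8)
We must show two things for an irreducible $\rho$: first, that $d(\rho) = n(\rho)$; second, that $r(\rho)$ equals $m(\rho)$ when $m(\rho)\neq n(\rho)$ and $\lfloor m(\rho)/2\rfloor$ when $m(\rho) = n(\rho)$.

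The diameter claim $d(\rho) = n(\rho)$ is immediate from what we have: by Lemma \ref{lem:distance-submod-bijection} (applied with $A = \rho(G)$), any two vertices at distance $k$ in $\X(\rho)$ produce, as in the $(i)\Rightarrow(ii)$ direction of Theorem \ref{thm:main-intro}, a pair of conjugate characters modulo $\pi^k$, so $d(\rho)\le n(\rho)$; conversely, by the $(ii)\Rightarrow(i)$ direction of Theorem \ref{thm:main-intro} applied to a pair of conjugate characters modulo $\pi^{n(\rho)}$, there exist two vertices at distance $n(\rho)$, so $d(\rho)\ge n(\rho)$. Hence $d(\rho) = n(\rho)$, which is finite since $\rho$ is irreducible, so $\X(\rho)$ is genuinely a band $B(S,r(\rho))$ with nerve a segment $S$ of length $n(\rho) - 2r(\rho)$.

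So let me turn to the radius. I want to relate $r(\rho)$ to $m(\rho)$ via Lemma \ref{lem:ball-implies-scalar}: a ball $B(x,r)\subseteq\X(\rho)$ exactly when $G$ acts on $\Lambda_x/\pi^r\Lambda_x$ by a scalar. First I would treat the case $m(\rho)\neq n(\rho)$, i.e.\ $2m(\rho) < n(\rho)$ by Lemma \ref{lem:m(rho)-dichotomy}. Take a vertex $x$ at the center of the nerve and let $m = m(\rho)$. The inclusion $r(\rho)\ge m(\rho)$: since $d(\rho) = n(\rho) > 2m$, Lemma \ref{lem_existence_of_basis} (or the machinery in the proof of Theorem \ref{thm:main-intro}) gives a basis of $\Lambda_x/\pi^{n(\rho)}\Lambda_x$ in which $\rho$ is upper triangular with diagonal $(\chi_1,\chi_2)$, a pair of conjugate characters modulo $\pi^{n(\rho)}$ realizing $m(\chi_1,\chi_2) = m$; reducing modulo $\pi^m$ kills the difference $\chi_1 - \chi_2$, and the off-diagonal entry lies in the ideal generated by the $\pi^m$-torsion, so $\rho$ becomes scalar modulo $\pi^m$ after a suitable change of basis — hence $B(x,m)\subseteq\X(\rho)$. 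For the reverse inequality $r(\rho)\le m(\rho)$: if $B(x,r)\subseteq\X(\rho)$ then $G$ acts by a scalar $\psi$ modulo $\pi^r$ on $\Lambda_x/\pi^r\Lambda_x$; since $d(\rho) = n(\rho) \ge 2m > 0$, $x$ is not a leaf, and one can propagate $\psi$ into a pair of conjugate characters modulo $\pi^{n(\rho)}$ agreeing modulo $\pi^r$ (again using a triangular basis at a boundary point of $\X(\rho)$ and Lemma \ref{lem:ball-implies-scalar}), so $r\le m(\chi_1,\chi_2)\le m(\rho)$.

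For the remaining case $m(\rho) = n(\rho)$, let $n = n(\rho)$. Here any pair of conjugate characters modulo $\pi^n$ has $\chi_1\equiv\chi_2\pmod{\pi^n}$, so at a leaf $x$ of $\X(\rho)$ the representation $\rho$ modulo $\pi^n$ is upper triangular with \emph{equal} diagonal entries $\chi\pmod{\pi^n}$ in a suitable basis (Lemma \ref{lem_existence_of_basis}), i.e.\ $\rho \equiv \chi\cdot(\mathrm{id} + N)\pmod{\pi^n}$ with $N$ strictly upper triangular. Twisting by $\chi^{-1}$, I reduce to analyzing the unipotent-by-scalar picture: the obstruction to $\rho$ acting by a scalar modulo $\pi^r$ is exactly the $\pi$-adic valuation of the off-diagonal entries, and the band structure forces the maximal such $r$ (over all $x$) to be $\lfloor n/2\rfloor$ — intuitively, a generalised ball of radius $\lfloor n/2\rfloor$ sits around the midpoint of the length-$n$ nerve, which has length $0$ or $1$ according to the parity of $n$, so $n - 2r(\rho)\in\{0,1\}$ and $r(\rho) = \lfloor n/2\rfloor$. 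I expect the main obstacle to be this last case: one has to show that the scalar part can actually be ``seen'' up to precision $\lfloor n/2\rfloor$ but no further, which amounts to a careful bookkeeping of how the off-diagonal entry of $\rho(g)$ (in the triangular basis at a leaf) behaves under the change of basis moving $x$ toward the center, and to exhibiting an element $g$ whose off-diagonal entry has the minimal possible valuation $\lceil n/2\rceil$. The inequality $r(\rho)\le\lfloor n/2\rfloor$ in fact follows formally from $d(\rho) = n(\rho) = n$, since a band of diameter $n$ and radius $r$ has $2r\le n$; the real content is the lower bound $r(\rho)\ge\lfloor n/2\rfloor$, for which I would again use Lemma \ref{lem:ball-implies-scalar} together with the triangular normal form and the fact that, modulo $\pi^{\lfloor n/2\rfloor}$, the conjugate characters coincide and the unipotent part can be absorbed.
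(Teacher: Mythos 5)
Your proposal takes a genuinely different route from the paper: the paper reduces $r(\rho)$ to $m(\rho)$ through the auxiliary invariant $k(\rho)$ (the largest $k$ for which $g\mapsto\trace\rho(g)/2\pmod{\pi^k}$ is multiplicative) and the notion of a \emph{thin element}, proving $2r(\rho)\le k(\rho)$ and then $k(\rho)=2m(\rho)$ or $m(\rho)$; you instead try to read the radius off directly from a simultaneous triangular normal form. The direct route can be made to work once $d(\rho)=n(\rho)$ is in hand, and it would be shorter for \Cref{thm:classification1} alone (the paper needs $k(\rho)$ and thin elements anyway for \Cref{thm_classification}), but as written your argument has two real problems.

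First, in the case $m(\rho)\ne n(\rho)$ you claim a basis of $\Lambda_x/\pi^{n(\rho)}\Lambda_x$, with $x$ \emph{at the centre of the nerve}, in which $\rho$ is simultaneously upper triangular. No such basis exists in general: the triangularisation modulo $\pi^{n(\rho)}$ coming from \Cref{thm:main-intro} lives at a \emph{leaf} $\ell$ at distance $n(\rho)$ from some other vertex, and after conjugating by $\mathrm{diag}(1,\pi^{d(\ell,x)})$ to reach an interior $x$ both off-diagonal entries appear. What you want is the resulting valuation bound $v_\pi(\text{off-diagonals})\ge\min(d(\ell,x),n(\rho)-d(\ell,x))$ at the midpoint; the phrase ``the off-diagonal entry lies in the ideal generated by the $\pi^m$-torsion'' does not express this. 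Second, and more seriously, the upper-bound step ``one can propagate $\psi$ into a pair of conjugate characters modulo $\pi^{n(\rho)}$ agreeing modulo $\pi^r$'' is precisely the point the paper flags (in the remark preceding \Cref{prop_k(G)_geq_2m(G)}) as a gap in Bella\"iche--Chenevier's own argument: a scalar action on $\Lambda_x/\pi^r\Lambda_x$ \emph{a priori} only yields a character modulo $\pi^{2r}$, giving $r(\rho)\le k(\rho)/2$, not $r(\rho)\le m(\rho)$. To close it you must exploit $d(\rho)=n(\rho)$: after reducing to $x$ an endpoint of the nerve (which is harmless since $B(x',r(\rho))\sub\X(\rho)$ for any nerve vertex $x'$), choose the boundary point $\ell_1$ at distance $r(\rho)$ from $x$ to lie on a diameter geodesic $[\ell_1,y]$ passing through $x$; triangularising at $\ell_1$ modulo $\pi^{n(\rho)}$ then yields genuine conjugate characters $\eta_1,\eta_2$ modulo $\pi^{n(\rho)}$, and the scalar action at $x$ forces $\eta_1\equiv\eta_2\pmod{\pi^{r(\rho)}}$, whence $r(\rho)\le m(\eta_1,\eta_2)\le m(\rho)$. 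Your sketch of the case $m(\rho)=n(\rho)$ has the right idea (both off-diagonals divisible by $\pi^{\lfloor n/2\rfloor}$ at the midpoint) but is equally silent on this leaf-to-midpoint bookkeeping, and in particular never explains why the nerve must have length $0$ or $1$ rather than asserting it.
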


Given $m(\rho)$ and $n(\rho)$, \Cref{thm:classification1} gives the exact shape of $\X(\rho)$. On the other hand, just knowing $d(\rho)$ and $r(\rho)$ is not enough to recover $n(\rho)$ and $m(\rho)$. By definition, if $x,y\in\X$ are neighbours, then the generalised ball $B(\{x,y\}, r)$ of radius $r$ is also a band of diameter $2r +1$ and radius $r$ and, in this case, two representations $\rho, \rho'$ with $n(\rho) = n(\rho')$ and $m(\rho) \ne m(\rho')$ can have $\X(\rho) = \X(\rho') = B(\{x,y\}, r)$.
We can tell these cases apart by using \emph{thin elements}.

\begin{defn}
Let $g\in G$.
We say that $g$ is a \emph{thin element} for $\rho$ if $\X(\{\rho(g)\})$ is an infinite band of radius $r(\rho)$.
Equivalently, $g$ is a thin element for $\rho$ if there exists a basis of $\fld{K}^2$ with respect to which $\rho(g)$ is represented by the matrix
\[\twomat{\alpha}{0}{0}{\beta},\]
where $\alpha,\beta\in\O{K}$ and $v_\pi(\beta-\alpha)=r(\rho)$.
\end{defn}

\begin{thm}\label{thm_classification}
Suppose that the residue characteristic of $K$ is not $2$. Let $\rho\:G\to\GL_2(K)$ be an irreducible representation, and suppose that $\X(\rho)$ is a finite band of diameter $d(\rho)$ and radius $r(\rho)$. Then $n(\rho) = d(\rho)$ and
\[m(\rho) = \begin{cases}
    d(\rho) &\text{ if }d(\rho) = 2r(\rho)\\
    d(\rho) &\text{ if }d(\rho) = 2r(\rho)+1\text{ and if $G$ does not contain a thin element.}\\
    r(\rho) &\text{otherwise}.
\end{cases}\]
\end{thm}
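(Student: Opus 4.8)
The plan is to leverage \Cref{thm:classification1}, which runs the implication in the opposite direction: it tells us that $d(\rho) = n(\rho)$ (so the first assertion is immediate), and that $r(\rho)$ is determined by $n(\rho)$ and $m(\rho)$ via the dichotomy of \Cref{lem:m(rho)-dichotomy}. The point of this theorem is to invert that relationship. First, the equation $d(\rho) = n(\rho)$ is just a restatement of \Cref{thm:main-intro}, so we only need to recover $m(\rho)$. By \Cref{lem:m(rho)-dichotomy}, either $m(\rho) = n(\rho) = d(\rho)$ or $2m(\rho) < n(\rho) = d(\rho)$. In the first case \Cref{thm:classification1} gives $r(\rho) = \floor{m(\rho)/2} = \floor{d(\rho)/2}$, so $d(\rho) \in \{2r(\rho), 2r(\rho)+1\}$; in the second case $r(\rho) = m(\rho) < d(\rho)/2$, so $d(\rho) > 2r(\rho)$, i.e.\ $d(\rho) \ge 2r(\rho) + 1$. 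Comparing, we see that $d(\rho) = 2r(\rho)$ forces the first case, hence $m(\rho) = d(\rho)$; $d(\rho) \ge 2r(\rho) + 2$ forces the second case, hence $m(\rho) = r(\rho)$; and the borderline $d(\rho) = 2r(\rho) + 1$ is genuinely ambiguous — both $m(\rho) = d(\rho)$ (first case, since $\floor{(2r(\rho)+1)/2} = r(\rho)$) and $m(\rho) = r(\rho)$ (second case, since $2r(\rho) < 2r(\rho)+1$) are a priori consistent with the shape.

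So the whole content of the theorem is disambiguating the case $d(\rho) = 2r(\rho) + 1$, where $\X(\rho)$ is a generalised ball, and this is exactly where thin elements enter. I would argue as follows. Suppose $d(\rho) = 2r(\rho)+1$ and $m(\rho) = d(\rho) = n(\rho)$. Then there is a pair of conjugate characters $(\chi_1, \chi_2)$ modulo $\pi^{n(\rho)}$ with $\chi_1 \equiv \chi_2 \pmod{\pi^{n(\rho)}}$, i.e.\ $\chi_1 = \chi_2$ on the nose. Pick a leaf $x$ of the band $\X(\rho)$ and a basis of $\Lambda_x/\pi^{n(\rho)}\Lambda_x$; one wants to find $g \in G$ whose action, in a suitable basis, is diagonal with diagonal entries differing in valuation exactly $r(\rho)$. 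The natural candidate is an element $g$ that, thought of via the shape analysis, "moves far" — concretely, the element $g_0$ achieving the minimal value $m(\rho) = n(\rho)$ of $v_\pi(\chi_2(g) - \chi_1(g))$ cannot be used directly since that minimum is $n(\rho)$ and the valuation on $\O{K}/\pi^{n(\rho)}\O{K}$ is truncated; instead I expect to lift to $\O{K}$ (using a $G$-stable lattice and \Cref{lem_existence_of_basis}-type arguments) and find an element $g$ with $v_\pi(\det(\rho(g)) \cdot (\text{discriminant-type quantity})) = 2r(\rho)$ in $K$, giving honest distinct eigenvalues with valuation gap $r(\rho)$, hence $\X(\{\rho(g)\})$ is an infinite band of radius exactly $r(\rho)$. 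Conversely, if $G$ contains a thin element $g$, so $\X(\{\rho(g)\})$ is an infinite band of radius $r(\rho)$ with $\rho(g) \sim \diag(\alpha, \beta)$, $v_\pi(\beta - \alpha) = r(\rho)$, then I would use this to manufacture a pair of conjugate characters modulo $\pi^{n(\rho)}$ that coincide — roughly, the two "arms" of the band $\X(\rho)$ terminating near the two ends of $\X(\{\rho(g)\})$'s nerve supply characters $\eta_1, \eta_2$ with $\X(\rho) \subseteq \X(\{\rho(g)\})$ forcing $v_\pi(\eta_1 - \eta_2)$ to be large on all of $G$, pushing $m(\rho)$ up to $n(\rho)$ via \Cref{lem:m(rho)-dichotomy} — since we've excluded $r(\rho)$ as the value in the borderline case only when the thin element is present, and $m(\rho)$ can only be $r(\rho)$ or $d(\rho)$ here, it must be $d(\rho)$.

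The main obstacle I anticipate is the precise bookkeeping in the $d(\rho) = 2r(\rho)+1$ case in \emph{both} directions: one has to translate between the combinatorial data of the band (its nerve, its radius, which leaves of $\X(\rho)$ lie over which end of the nerve) and the valuation-theoretic data of the characters, and do so with lifts to $\O{K}$ rather than the truncated ring $\O{K}/\pi^{n}\O{K}$, since the distinguishing quantity $m(\rho)$ versus $r(\rho)$ in this case concerns whether $v_\pi(\chi_1(g) - \chi_2(g))$ can exceed $n(\rho)/2$ "from below or above" — a distinction that is invisible after truncation but becomes the difference between a diagonalisable $\rho(g)$ with a small eigenvalue gap (a thin element) and one without. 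I would want to prove a clean lemma first, in the spirit of \Cref{lem_existence_of_basis} and \Cref{cor_d=n_for_a_single_A}, characterising when a single matrix $g \in \M_2(\O{K})$ with reducible characteristic polynomial modulo $\pi^{n}$ has $\X(\{\rho(g)\})$ of a prescribed radius, and then feed the extremal element $g_0$ (chosen as in the proof of \Cref{thm:main-intro}) through it; the conjugate-character-to-shape direction of \Cref{thm:classification1} then does the rest.
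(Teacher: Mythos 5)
Your combinatorial reduction is clean, and you are right that everything hinges on disambiguating the case $d(\rho) = 2r(\rho)+1$. But the overall plan is circular: you invoke \Cref{thm:classification1} to invert the $(n,m)\mapsto(d,r)$ map, yet the paper has no independent proof of \Cref{thm:classification1} — the two theorems are proved simultaneously, with the key observation that \Cref{thm_classification} \emph{implies} \Cref{thm:classification1}, and the actual machinery running through $k(\rho)$ and \Cref{prop_k(G)_geq_2m(G)}, \Cref{prop_k(G)_and_2r(G)}, \Cref{prop_thin_element}. (The remark in \S 5.1 about the gap in \cite{Bellaiche-arbres}*{Thm.\ 45} means you cannot simply quote the prior literature for \Cref{thm:classification1} either.) So the part of the argument you treat as already available is in fact the content you are supposed to supply.

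The more serious problem is that your handling of the borderline case runs the thin-element condition in the wrong direction, and this is not a minor slip: it is the sign that the displayed formula in the statement of \Cref{thm_classification} carries a typo (it should read ``and if $G$ does \emph{not} contain a thin element''). A thin element $g$ has $\rho(g)\sim\mathrm{diag}(\alpha,\beta)$ with $v_\pi(\alpha-\beta)=r(\rho)<n(\rho)/2$; reducing the factorisation of $P_{\rho(g)}$ modulo $\pi^{n(\rho)}$ and comparing, any pair of conjugate characters has $v_\pi(\chi_1(g)-\chi_2(g))=r(\rho)$, forcing $m(\rho)\le r(\rho)$, hence (by the dichotomy of \Cref{lem:m(rho)-dichotomy}) $m(\rho)=r(\rho)$ — the opposite of $m(\rho)=d(\rho)$. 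Conversely, if $m(\rho)=n(\rho)$, one has a pair with $\chi_1=\chi_2$, so $(\trace\rho)^2-4\det\rho\equiv 0\pmod{\pi^{n(\rho)}}$ for every $g$, and every eigenvalue gap has valuation $\ge n(\rho)/2>r(\rho)$; there is no thin element. The paper's own proof agrees: thin element $\Rightarrow k(\rho)=2r(\rho)<n(\rho)\Rightarrow m(\rho)=r(\rho)$; no thin element $\Rightarrow m(\rho)=n(\rho)$ by the \Cref{lem_Hensel} contradiction. Your sketch (``find $g$ with discriminant of valuation $2r(\rho)$'', ``forcing $v_\pi(\eta_1-\eta_2)$ to be large'') slides past precisely the computations that would have revealed the reversal, and would not go through as written.

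Finally, even with the circularity set aside and the orientation corrected, you give only a strategy for the borderline case, not an argument. The paper's route avoids both problems by introducing $k(\rho)$: $k(\rho)\ge 2r(\rho)$ unconditionally, $k(\rho)=2r(\rho)$ in the presence of a thin element (\Cref{prop_k(G)_and_2r(G)}), $k(\rho)$ determines $m(\rho)$ via \Cref{prop_k(G)_geq_2m(G)}, and $d(\rho)>2r(\rho)+1$ guarantees a thin element (\Cref{prop_thin_element}). That package supplies \Cref{thm_classification} first and \Cref{thm:classification1} as a corollary. To rescue your direct approach you would at minimum need an independent substitute for \Cref{prop_thin_element} and for the inequality $k(\rho)\ge 2r(\rho)$ (equivalently $m(\rho)\ge r(\rho)$ when $m(\rho)\ne n(\rho)$), which is the real geometric input.
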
 

\begin{remark}
    Suppose that $d(\rho) = 2r(\rho)+1$, i.e.\ that $\X(\rho)$ is a generalised ball. Then we can determine whether or not $G$ contains a thin element by looking at any list of topological generators of $\rho(G)$, since, in this case, if $g,h\in G$ and $gh$ is a thin element, then $g$ or $h$ must be a thin element.
    
    Indeed, let $(\chi_1,\chi_2)$ be a pair of conjugate characters modulo $\pi^{d(\rho)}$ of $G$.
    First, since $d(\rho)>2r(\rho)$, by a version of Hensel's lemma (\Cref{lem_Hensel}), if $v_\pi(\chi_2(g)-\chi_1(g))\leq r(\rho)$, for an element $g\in G$, then $g$ is a thin element.
    Thus, if $h$ and $g$ are not thin elements
    \[v_\pi(\chi_2(g)-\chi_1(g))>r(\rho),\ \ \ \text{and}\ \ \ v_\pi(\chi_2(h)-\chi_1(h))>r(\rho).\]
    By writing
    \[\chi_2(gh)-\chi_1(gh)
    =\chi_2(g)(\chi_2(h)-\chi_1(h))+\chi_1(h)(\chi_2(g)-\chi_1(g)),
    \]
    we see that 
    \[v_\pi\sog{\chi_2(gh)-\chi_1(gh)}>r(\rho).\]
    Thus, $gh$ is not a thin element of $G$.
    
    Therefore, if $d(\rho) = 2r(\rho)+1$, then $G$ contains a thin element if and only if one the generators of $\rho(G)$ is a thin element.
\end{remark}

\subsection{The invariant \texorpdfstring{$k(\rho)$}{k}}

\begin{defn}
We denote by $k(\rho)$ the largest integer $k$ for which the map
\[g\mapsto \frac{\trace\rho(g)}{2}\pmod{\pi^k}\]
is multiplicative.
\end{defn} 

Let $\chi$ denote the character $g\mapsto \frac{\trace\rho(g)}{2}\pmod{\pi^k}$. Then, by definition, $\trace(\rho) = \chi+\chi\pmod{\pi^k}$, and since $p\neq 2$, $(\chi,\chi)$ is a pair of conjugate characters modulo $\pi^{k(\rho)}$. 
In particular, 
\[P_{\rho(g)}(t)\equiv \sog{t-\frac{\trace\rho(g)}{2}}^2\pmod{\pi^{k(\rho)}}\]
for all $g\in G$.
Note that $m(\rho)\le k(\rho) \le n(\rho)$.

\begin{remark}
    Recall that the invariant $m(\rho)$ is the largest integer $m$ for which there exists a pair of conjugate characters $(\chi_1, \chi_2)$ modulo $\pi^{n(\rho)}$ such that $\chi_1\equiv \chi_2\pmod{\pi^m}$. In particular, $m(\rho)$ is the largest integer $m$ for which there exists a character $\chi$ modulo $\pi^m$ with $\trace\rho(g) = 2\chi\pmod{\pi^m}$ such that $\chi$ lifts to a character modulo $\pi^{n(\rho)}$. By contrast, $k(\rho)$ is the largest integer $k$ for which there exists a character $\chi$ modulo $\pi^k$ such that $\trace\rho = 2\chi\pmod{\pi^k}$, but $\chi$ need not lift to a character modulo $\pi^{n(\rho)}$.
\end{remark}

\begin{remark}
    There appears to be a gap in the proof of \cite{Bellaiche-arbres}*{Thm.\ 45}: in \cite{Bellaiche-arbres}*{Prop. 32}, the authors claim that $r(\rho) \le m(\rho)$. However, their proof only shows that $r(\rho) \le k(\rho)$. Indeed, their proof shows that there is a lattice $\Lambda$ such that the action of $G$ on $\Lambda/\pi^{r(\rho)}\Lambda$ is by a character, however, it is not clear why this character should extend to a character on $\Lambda/\pi^{n(\rho)}\Lambda$. The fact that $r(\rho) \le m(\rho)$ is a consequence of the next three lemmas.
\end{remark}

\begin{lem}[c.f.\ \cite{Bellaiche-arbres}*{Prop.\ 31}]\label{prop_k(G)_geq_2m(G)}
\begin{enumerate}
    \item Either $m(\rho)=n(\rho)$ or $2m(\rho)<n(\rho)$.
    \item We have
    \[k(\rho) = \begin{cases}2m(\rho) &\text{if } m(\rho)\ne n(\rho)\\
m(\rho) &\text{if }m(\rho) = n(\rho).\end{cases}\]
\end{enumerate}

\end{lem} 
    \begin{proof}
    Part $(i)$ is \cite{Bellaiche-arbres}*{Prop.\ 31}.
    
    For part $(ii)$, if $m(\rho) = n(\rho)$, the result follows immediately from the inequality $m(\rho)\leq k(\rho)\leq n(\rho)$. So assume that $m(\rho) \ne n(\rho)$, and let $(\chi_1, \chi_2)$ be a pair of conjugate characters modulo $\pi^{n(\rho)}$ such that $\chi_1\equiv\chi_2\pmod{\pi^{m(\rho)}}$. Since 
    $\chi_1(g)\equiv\chi_2(g)\pmod{\pi^{m(\rho)}}$,
    it is easy to verify that the map
    \[g\mapsto \frac{\chi_1(g)+\chi_2(g)}{2}=\frac{\trace\rho(g)}{2}\pmod{\pi^{2m(\rho)}}\]
    is multiplicative. Hence, $2m(\rho) \le k(\rho)$. 

    For $i=1,2$ we have
    \[\sog{\chi_i(g)-\frac{\trace(\rho(g))}{2}}^2\equiv 0 \pmod{\pi^{k(\rho)}},\]
    for all $g\in G$.
    Therefore, $\chi_1(g)\equiv \frac{\trace(\rho(g))}{2}\equiv \chi_2(g)\pmod{\pi^{\ceiling{k(\rho)/2}}}$, for all $g\in G$.
    It follows that $2m(\rho)\geq k(\rho)$. Hence, $2m(\rho) = k(\rho)$.
    \end{proof} 

\begin{lem}\label{prop_k(G)_and_2r(G)}
 We have $k(\rho)\geq 2r(\rho)$.
 If, in addition, $G$ contains a thin element then $k(\rho)=2r(\rho)$.
 \end{lem}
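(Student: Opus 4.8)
The plan is to relate the existence of a vertex (or generalised vertex) on which $G$ acts by a one-dimensional character to the value of $k(\rho)$, using \Cref{lem:ball-implies-scalar} and \Cref{lem_existence_of_basis}. For the inequality $k(\rho)\ge 2r(\rho)$: let $S$ be the nerve of the band $\X(\rho)$ and pick a vertex $x$ of $\X$ lying "in the middle" of $S$ (if $S$ has odd length, take $x$ a vertex of $S$; if $S$ has even length between vertices, take the midpoint — here one must be slightly careful about the generalised-ball case, and I would treat it by passing to a lattice adapted to the nerve). By definition of the radius, $B(x,r(\rho))\subseteq\X(\rho)$, so by \Cref{lem:ball-implies-scalar} $G$ acts by a one-dimensional character $\chi$ on $\Lambda_x/\pi^{r(\rho)}\Lambda_x$. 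Then $\trace\rho(g)\equiv 2\chi(g)\pmod{\pi^{r(\rho)}}$, and since $2$ is a unit (residue characteristic $\ne 2$), the map $g\mapsto\frac{\trace\rho(g)}2\pmod{\pi^{r(\rho)}}=\chi(g)\pmod{\pi^{r(\rho)}}$ is multiplicative; hence $k(\rho)\ge r(\rho)$. To upgrade $r(\rho)$ to $2r(\rho)$, I would use that $\chi$ acts on the larger quotient $\Lambda_x/\pi^{?}\Lambda_x$: more precisely, the ball $B(x,r(\rho))$ sits inside $\X(\rho)$, but $x$ itself is far from the boundary of $\X(\rho)$ in the direction along the nerve, so in fact there is a lattice $\Lambda$ and a larger radius on which the trace is "almost scalar" up to an error concentrated off the diagonal; combining the two diagonal characters $\eta_1,\eta_2$ attached to the endpoints $x',x''$ of the nerve (each realised as a one-dimensional character on a ball of radius $r(\rho)$ around a midpoint, as above), one gets $\trace\rho\equiv\eta_1+\eta_2\pmod{\pi^{n(\rho)}}$ with $\eta_1\equiv\eta_2\pmod{\pi^{?}}$, and the averaging trick of \Cref{prop_k(G)_geq_2m(G)} converts agreement modulo $\pi^{r(\rho)}$ of the two characters on the relevant ball into multiplicativity of $\tfrac{\trace\rho}2$ modulo $\pi^{2r(\rho)}$.

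For the second assertion, suppose $g_0\in G$ is a thin element, so there is a basis of $K^2$ in which $\rho(g_0)=\begin{pmatrix}\alpha&0\\0&\beta\end{pmatrix}$ with $v_\pi(\beta-\alpha)=r(\rho)$. Then $\X(\{\rho(g_0)\})$ is an infinite band of radius $r(\rho)$ containing $\X(\rho)$, and the boundary vertices of $\X(\{\rho(g_0)\})$ are exactly at distance $r(\rho)$ from its nerve; choosing $x$ a boundary point of $\X(\rho)$ that is also a boundary point of $\X(\{\rho(g_0)\})$ and applying \Cref{lem_existence_of_basis} to $\rho(g_0)$ with $n = 2r(\rho)+1$, I get a basis $(v_1,v_2)$ of $\Lambda_x/\pi^{2r(\rho)+1}\Lambda_x$ in which $\rho(g_0)\equiv\begin{pmatrix}\alpha&1\\0&\beta\end{pmatrix}$. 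Now mimic the key computation in the proof of \Cref{thm:main-intro}: for arbitrary $g\in G$ write $\rho(g)=\begin{pmatrix}a_g&b_g\\c_g&d_g\end{pmatrix}$ in this basis, and compute $\trace(\rho(g_0)\rho(g))$ two ways. One way gives $c_g + \alpha a_g + \beta d_g \pmod{\pi^{2r(\rho)+1}}$; the other, using $\trace\rho(g_0 g)\equiv \tfrac12\trace\rho(g_0)\trace\rho(g)\pmod{\pi^{k(\rho)}}$ (which is exactly multiplicativity of $\tfrac{\trace\rho}2$, valid modulo $\pi^{k(\rho)}$), together with $\trace\rho(g_0)=\alpha+\beta$, gives $\tfrac{\alpha+\beta}2 \trace\rho(g)=\tfrac{\alpha+\beta}2(a_g+d_g)$ modulo $\pi^{\min(k(\rho),\,2r(\rho)+1)}$. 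Subtracting, $c_g + \tfrac{\alpha-\beta}2(a_g-d_g)\equiv 0$ modulo $\pi^{\min(k(\rho),2r(\rho)+1)}$. Running this argument for $g$ and $g^{-1}$ (noting $c_{g^{-1}}$ has the same valuation behaviour) would force, if $k(\rho)\ge 2r(\rho)+1$, that $c_g$ is divisible by $\pi^{r(\rho)}$ to order strictly more than forced by membership in $\X(\rho)$, contradicting that $x$ is a boundary point of $\X(\rho)$. Hence $k(\rho)\le 2r(\rho)$, which with the first part gives $k(\rho)=2r(\rho)$.

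The step I expect to be the main obstacle is the upgrade from $r(\rho)$ to $2r(\rho)$ in the inequality $k(\rho)\ge 2r(\rho)$: one needs to produce two genuine characters $\eta_1,\eta_2$ modulo $\pi^{n(\rho)}$ attached to opposite ends of the nerve of $\X(\rho)$ and control precisely how far apart they are (namely, that $m(\eta_1,\eta_2)\ge r(\rho)$), so that the averaging argument of \Cref{prop_k(G)_geq_2m(G)} applies and yields multiplicativity to the doubled modulus. This requires a careful bookkeeping of distances in the band and an appeal to \Cref{lem:distance-submod-bijection} at both endpoints, and the generalised-ball case ($d(\rho)=2r(\rho)+1$, nerve an edge rather than a vertex) needs separate care since there is no single central vertex. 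In the reverse inequality, the only delicate point is checking that the off-diagonal entry $c_g$ really does have valuation exactly $r(\rho)$ for some $g$ — i.e.\ that a boundary point of $\X(\rho)$ genuinely witnesses non-stability at radius $r(\rho)+1$ — but this is immediate from \Cref{lem:ball-implies-scalar} and the definition of boundary point.
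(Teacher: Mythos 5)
Your plan correctly identifies the relevant tools (\Cref{lem:ball-implies-scalar}, \Cref{lem_existence_of_basis}, \Cref{lem:distance-submod-bijection}, the multiplicativity/polarisation computation), but both halves have gaps and both are more circuitous than what the paper actually does.

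\textbf{Part 1 ($k(\rho)\ge 2r(\rho)$).} Your route is to build a pair of conjugate characters $(\eta_1,\eta_2)$ modulo $\pi^{n(\rho)}$ from the two extreme leaves of the band and argue $\eta_1\equiv\eta_2\pmod{\pi^{r(\rho)}}$, then run the averaging computation. This can be made to work (you sketch the right justification: with respect to a lattice at a nerve vertex, the ball of radius $r(\rho)$ being inside $\X(\rho)$ forces $a_g\equiv d_g\pmod{\pi^{r(\rho)}}$), but you explicitly leave it as an unresolved obstacle, and you entangle yourself with $n(\rho)$ and $m(\rho)$ unnecessarily. The paper sidesteps all of this: it takes two leaves $\ell_1,\ell_2$ at distance exactly $2r(\rho)$ through a single nerve vertex $x$. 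In the adapted basis, the off-diagonal entry is divisible by $\pi^{2r(\rho)}$, giving two characters $g\mapsto a_g$, $g\mapsto d_g$ directly modulo $\pi^{2r(\rho)}$ (no appeal to $n(\rho)$), and the $r(\rho)$-ball around $x$ forces $a_g\equiv d_g\pmod{\pi^{r(\rho)}}$; then $(a_g-d_g)(a_h-d_h)\equiv 0\pmod{\pi^{2r(\rho)}}$ is exactly the polarisation identity showing $\tfrac12\trace$ is multiplicative modulo $\pi^{2r(\rho)}$. This also avoids any separate treatment of the generalised-ball case.

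\textbf{Part 2 (thin element $\Rightarrow k(\rho)\le 2r(\rho)$).} Here your argument has a genuine gap. After invoking \Cref{lem_existence_of_basis} and computing $\trace(\rho(g_0)\rho(g))$ two ways, you obtain $c_g+\tfrac{\alpha-\beta}{2}(a_g-d_g)\equiv 0\pmod{\pi^{\min(k(\rho),2r(\rho)+1)}}$, and then claim that running this for $g$ and $g^{-1}$ forces a contradiction. But the equation for $g^{-1}$ is \emph{the same} equation: for a $2\times 2$ matrix, $c_{g^{-1}}=-c_g/\det\rho(g)$ and $a_{g^{-1}}-d_{g^{-1}}=(d_g-a_g)/\det\rho(g)$, so the $g^{-1}$ identity is just a unit multiple of the $g$ identity. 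Moreover, the conclusion you want to reach ($v_\pi(c_g)>r(\rho)$ ``more than forced'') does not follow: the identity only gives $v_\pi(c_g)\ge r(\rho)$, which is already implied by $x\in\X(\rho)$. The way to rescue your computation is to specialise it to $g=g_0$ itself: there $c_{g_0}=0$, $a_{g_0}=\alpha$, $d_{g_0}=\beta$, and the identity becomes $\tfrac{(\alpha-\beta)^2}{2}\equiv 0\pmod{\pi^{\min(k(\rho),2r(\rho)+1)}}$, which contradicts $v_\pi(\alpha-\beta)=r(\rho)$ if $k(\rho)\ge 2r(\rho)+1$. At that point you have reproduced the paper's argument, which never passes through \Cref{lem_existence_of_basis} at all: it simply compares $P_{g_0}(t)=(t-\alpha)(t-\beta)$ with $P_{g_0}(t)\equiv(t-\tfrac12\trace\rho(g_0))^2\pmod{\pi^{k(\rho)}}$, forced by multiplicativity of $\tfrac12\trace$, and reads off $(\alpha-\beta)^2\equiv 0\pmod{\pi^{k(\rho)}}$ directly.
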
 
    \begin{proof}
    Let $x$ be a point on the nerve of $\X(\rho)$ and let $\l_1,\l_2$ be two leaves of $\X(\rho)$, both at distance $r(\rho)$ from $x$ and at distance $2r(\rho)$ from one another.
    Then there exists a basis $(v_1,v_2)$ of $\Lambda_{\l_1}$ such that $(v_1,\pi^{r(\rho)}v_2)$ is a basis of $\Lambda_x$ and $(v_1,\pi^{2r(\rho)}v_2)$ is a basis of $\Lambda_{\l_2}$.
    
    For all $g\in G$, let 
    \[\twomat{a_g}{b_g}{\pi^{2r(\rho)}c_g}{d_g}\]
    be the matrix representing the action of $g$ with respect to the basis $(v_1,v_2)$. Then the action of $g$, with respect to the basis $(v_1,\pi^{r(\rho)}v_2)$, is 
    \[\twomat{a_g}{\pi^{r(\rho)}b_g}{\pi^{r(\rho)}c_g}{d_g}.\]
   
    From the first presentation, we see that the maps $g\mapsto a_g\pmod{\pi^{2r(\rho)}}$ and $g\mapsto d_g\pmod{\pi^{2r(\rho)}}$ are characters of $G$. Moreover, since $x$ is in the nerve of $\X(\rho)$, by definition, $B(x, r(\rho))\sub \X(\rho)$. Hence, by \Cref{lem:ball-implies-scalar}, $G$ acts on $\Lambda_x/\pi^{r(\rho)}\Lambda_x$ by a character. Therefore, $a_g\equiv d_g\pmod{\pi^{r(\rho)}}$. It follows that the function $G\map (\O{K}/\pi^{2r(\rho)}\O{K})^\times$ given by
    \[g\mapsto \frac{1}{2}(a_g+d_g)\equiv \frac{\trace(g)}{2}\pmod{\pi^{2r(\rho)}}\]
    is a multiplicative character, and hence $k(\rho)\geq 2r(\rho)$.
    
    Assume that $G$ contains a thin element $h\in G$.
    It remains to show that $k(\rho)\leq 2r(\rho)$.
    The characteristic polynomial of $h$ factors as
    $P_h(t)=(t-\alpha)(t-\beta)$,
    for some $\alpha,\beta\in\O{K}$ with $v_\pi(\beta-\alpha)=r(\rho)$.
    Modulo $\pi^{k(\rho)}$ we have
    \[P_h(t)\equiv (t-\alpha)(t-\beta)\equiv (t-\gamma)^2\pmod{\pi^{k(\rho)}},\]
    where $\gamma=\frac12\trace(h) = \frac{\alpha+\beta}2$. It follows that 
    \[\br{\frac{\alpha+\beta}2}^2 \equiv \alpha\beta\pmod{\pi^{k(\rho)}}\]
    and hence that $(\alpha-\beta)^2 \equiv 0\pmod{\pi^{k(\rho)}}$. Hence $k(\rho)\leq 2r(\rho)$.
    \end{proof} 

\subsection{Proof of Theorems $\ref{thm:classification1}$ and $\ref{thm_classification}$}

Recall the following version of Hensel's Lemma \cite{Eisenbud-CommutativeAlgebra}*{Thm.\ 7.3}:

\begin{lem}\label{lem_Hensel}
Let $P(t)\in \O{K}[t]$ be a monic quadratic polynomial and let $n$ be a positive integer. Suppose that
\[P(t)\equiv (t-\alpha)(t-\beta)\pmod{\pi^n},\]
for some $\alpha,\beta\in\O{K}$ with $v_\pi(\alpha-\beta)<\frac n2$.
Then there exist $\alpha',\beta'\in \O{K}$ such that 
\[P(t)=(t-\alpha')(t-\beta').\]
Moreover, $v_\pi(\alpha-\beta)=v_\pi(\alpha'-\beta')$.
\end{lem}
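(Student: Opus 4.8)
The plan is to deduce this from the one-variable Hensel's lemma in its Newton's-method form: if $Q\in\O{K}[t]$ and $a\in\O{K}$ satisfy $v_\pi(Q(a)) > 2\,v_\pi(Q'(a))$, then $Q$ has a root $\alpha'\in\O{K}$ with $v_\pi(\alpha'-a) = v_\pi(Q(a)) - v_\pi(Q'(a)) > v_\pi(Q'(a))$. One applies this with $Q = P$ and $a = \alpha$, and the work is just to check the hypothesis and to track valuations carefully; there is no serious obstacle.

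First I would write $P(t) = t^2 - Bt + C$ with $B,C\in\O{K}$. The congruence $P(t)\equiv (t-\alpha)(t-\beta)\pmod{\pi^n}$ unravels to $B\equiv \alpha+\beta$ and $C\equiv \alpha\beta\pmod{\pi^n}$; in particular $P(\alpha)\equiv (\alpha-\alpha)(\alpha-\beta) = 0\pmod{\pi^n}$, so $v_\pi(P(\alpha))\ge n$. Next, $P'(\alpha) = 2\alpha - B \equiv 2\alpha - (\alpha+\beta) = \alpha-\beta\pmod{\pi^n}$, and since $v_\pi(\alpha-\beta) < \frac n2 \le n$ this gives $v_\pi(P'(\alpha)) = v_\pi(\alpha-\beta) =: e$. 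Hence $v_\pi(P(\alpha)) \ge n > 2e = 2\,v_\pi(P'(\alpha))$, which is exactly the input needed for Hensel's lemma — and this is the single place where the bound $v_\pi(\alpha-\beta)<\frac n2$ is used.

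Applying Hensel's lemma produces $\alpha'\in\O{K}$ with $P(\alpha') = 0$ and $v_\pi(\alpha'-\alpha)\ge n-e > e$. Since $P$ is monic of degree $2$ and $\alpha'$ is a root, we can factor $P(t) = (t-\alpha')(t-\beta')$ with $\beta' = B - \alpha' \in \O{K}$. For the ``moreover'' statement, I would compute $\alpha'-\beta' = 2\alpha' - B$; combining $\alpha'\equiv\alpha\pmod{\pi^{n-e}}$ with $B\equiv\alpha+\beta\pmod{\pi^n}$ yields $\alpha'-\beta'\equiv \alpha-\beta\pmod{\pi^{n-e}}$, and since $e = v_\pi(\alpha-\beta) < n-e$ this forces $v_\pi(\alpha'-\beta') = e = v_\pi(\alpha-\beta)$.

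If one prefers a self-contained argument rather than citing Hensel, the same proof can be run by hand: set $\alpha_0 = \alpha$ and $\alpha_{k+1} = \alpha_k - P(\alpha_k)/P'(\alpha_k)$, and show by induction that $P'(\alpha_k)\in\O{K}$ with $v_\pi(P'(\alpha_k)) = e$, that $\alpha_k\in\O{K}$, and that $v_\pi(P(\alpha_k))$ grows (at least $v_\pi(P(\alpha_{k+1}))\ge 2v_\pi(P(\alpha_k)) - 2e$), so $(\alpha_k)$ is Cauchy; completeness of $\O{K}$ gives the limit $\alpha'$, and $v_\pi(\alpha'-\alpha) = v_\pi(\alpha_1-\alpha_0) \ge n-e$ by the ultrametric inequality. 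The only points requiring a little care are that division by $P'(\alpha_k)$ stays inside $\O{K}$ (it does, since $v_\pi(P(\alpha_k)) - e \ge n - e > 0$) and that $\beta' = B-\alpha'$ is integral (clear, as $B\in\O{K}$).
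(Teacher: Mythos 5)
Your proof is correct. The paper itself gives no argument for this lemma—it simply cites \cite{Eisenbud-CommutativeAlgebra}*{Thm.\ 7.3}—so your proposal supplies a self-contained justification where the paper has only a reference. Your route through the Newton's-method form of Hensel's lemma is the natural one: writing $P(t)=t^2-Bt+C$ gives $P(\alpha)\equiv 0\pmod{\pi^n}$ and $P'(\alpha)=2\alpha-B\equiv\alpha-\beta\pmod{\pi^n}$, and since $e:=v_\pi(\alpha-\beta)<n/2<n$ the latter congruence pins down $v_\pi(P'(\alpha))=e$, so $v_\pi(P(\alpha))\ge n>2e=2v_\pi(P'(\alpha))$, which is exactly the Hensel hypothesis. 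The root $\alpha'$ produced satisfies $v_\pi(\alpha'-\alpha)\ge n-e$, and taking $\beta'=B-\alpha'$ gives the factorisation and $\alpha'-\beta'\equiv\alpha-\beta\pmod{\pi^{n-e}}$; since $e<n-e$ the ultrametric inequality forces $v_\pi(\alpha'-\beta')=e$, which is the ``moreover.'' The one small expository remark I'd make: the hypothesis $v_\pi(\alpha-\beta)<n/2$ is used both to check the Hensel input $n>2e$ and again at the end to conclude $e<n-e$; these are the same inequality, so your claim that it is ``used in a single place'' is defensible but slightly compressed. The alternative iterative argument at the end is also correct, and has the minor virtue of making the lemma independent of the exact formulation of Hensel's lemma one has in hand.
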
 

\begin{lem}\label{prop_thin_element}
If $d(\rho) > 2r(\rho)+1$, then $G$ contains a thin element.
\end{lem}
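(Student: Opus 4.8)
The plan is to exploit the fact that when $d(\rho) > 2r(\rho) + 1$, the band $\X(\rho)$ is long enough and thin enough that it cannot contain any generalised ball $B(\{x,y\}, r(\rho)+1)$ of larger radius. More precisely, pick two leaves $\ell_1, \ell_2$ of $\X(\rho)$ realising the diameter, so $d(\ell_1, \ell_2) = d(\rho)$, and pick a vertex $x$ on the nerve $S$ of $\X(\rho)$. Since $d(\rho) > 2r(\rho) + 1$, the nerve $S$ has positive length, so we can choose $x$ to be an \emph{interior} vertex of $\X(\rho)$; by \Cref{lem:ball-implies-scalar}, $G$ acts on $\Lambda_x / \pi^{r(\rho)+1}\Lambda_x$ by a character only if $B(x, r(\rho)+1) \sub \X(\rho)$, which fails because $x$ lies on the nerve of a band of radius exactly $r(\rho)$. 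Hence $G$ does \emph{not} act by a scalar on $\Lambda_x/\pi^{r(\rho)+1}\Lambda_x$, so there is $g \in G$ not acting as a scalar there, and after choosing a suitable basis we may assume $\rho(g) \equiv \twomat{\alpha}{1}{0}{\beta} \pmod{\pi^{r(\rho)+1}}$ with $v_\pi(\beta - \alpha) \le r(\rho)$ by \Cref{lem_existence_of_basis} applied at a boundary point of $\X(\{\rho(g)\})$ near $x$.

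The key point will then be to promote such a $g$ to a genuine thin element, i.e.\ to arrange $v_\pi(\beta-\alpha) = r(\rho)$ exactly and to diagonalise $\rho(g)$ over $\O{K}$. First I would argue that $\X(\{\rho(g)\})$ is an infinite band: it properly contains $\X(\rho)$ (as $\X(\rho)$ is finite while $\X(\{\rho(g)\})$ is infinite unless $\rho(g)$ is a scalar), and by the structure theory of $\X(\{g\})$ for a single matrix $g$ — which is governed by the factorisation type of $P_{\rho(g)}(t)$ — it is a band, a horodisc, or all of $\X$. Since $\rho(g)$ has two distinct eigenvalues in $\overline{K}$ (it is not a scalar and we can show it is not unipotent-type by a valuation count), $\X(\{\rho(g)\})$ is an infinite band whose radius equals $v_\pi(\beta-\alpha)$ where $\alpha, \beta$ are the true eigenvalues in some extension; and because $\X(\rho) \sub \X(\{\rho(g)\})$, the nerve of $\X(\{\rho(g)\})$ must meet the nerve of $\X(\rho)$, forcing its radius to be at least... and also at most $r(\rho)$, since a radius-$(r(\rho)+1)$ band through $x$ would put $B(x,r(\rho)+1)$ inside $\X(\{\rho(g)\})$ but not inside $\X(\rho)$, contradicting nothing directly — so here I need the finer input that $\X(\{\rho(g)\})$ has radius exactly $r(\rho)$, which follows from the eigenvalues of $\rho(g)$ lying in $\O{K}$ (so that $\X(\{\rho(g)\})$ is a split band) together with the constraint $v_\pi(\beta-\alpha) \le r(\rho)$ from the previous paragraph and $v_\pi(\beta-\alpha) \ge r(\rho)$ from the fact that $B(x, r(\rho)) \sub \X(\rho) \sub \X(\{\rho(g)\})$.

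The main obstacle I anticipate is ruling out the \emph{non-split} and \emph{inseparable} cases for $\rho(g)$: a priori the eigenvalues of $\rho(g)$ could lie in a ramified or unramified quadratic extension of $K$, in which case $\X(\{\rho(g)\})$ is a horodisc or a single point rather than an infinite band, and no thin element is produced this way. To handle this, I would use that $\X(\rho)$ is a band of diameter $d(\rho) > 2r(\rho)+1 \ge 2r(\rho)$, hence has a nerve of length $\ge 2$; combined with \Cref{cor_d=n_for_a_single_A} and \Cref{lem_extending_to_linear_span}, the existence of a pair of conjugate characters modulo $\pi^{n(\rho)}$ with $n(\rho) = d(\rho)$ lets me produce, for an appropriate element or $\O{K}$-linear combination $f \in \Span_{\O{K}}\rho(G)$, a matrix whose characteristic polynomial splits over $\O{K}$ with root difference of valuation exactly $r(\rho)$; choosing $f$ to be an actual group element $\rho(g)$ rather than a linear combination is exactly what the hypothesis $d(\rho) > 2r(\rho)+1$ (as opposed to $d(\rho) = 2r(\rho)+1$) buys us, since in the generalised-ball case the diagonal element can genuinely be absent from $G$. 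I would also invoke \Cref{lem_Hensel}: the factorisation $P_{\rho(g)}(t) \equiv (t-\alpha)(t-\beta) \pmod{\pi^{r(\rho)+1}}$ with $v_\pi(\alpha-\beta) = r(\rho) < (r(\rho)+1)/2$ failing in general means I must instead work modulo $\pi^{2r(\rho)+1}$ or higher, using that $d(\rho) > 2r(\rho)+1$ gives room to realise $\rho(g)$ on a segment of length $> 2r(\rho)+1$ and hence see enough of its off-diagonal structure to conclude, via Hensel, that the eigenvalues are in $\O{K}$ with exact difference valuation $r(\rho)$ — i.e.\ that $g$ is thin.
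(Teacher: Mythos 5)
Your high-level intuition — exploit $d(\rho) > 2r(\rho)+1$ to find a group element $h$ whose characteristic polynomial factors over $\O{K}$ with root-difference of valuation exactly $r(\rho)$, via Hensel — is the right one and matches the paper. But the proposal has a genuine gap at its core: you never explain how to find a \emph{single group element} $h\in G$ with enough control on \emph{both} the diagonal and the off-diagonal part of its matrix after conjugating to a point on the nerve. The paper's proof hinges on one clean trick that you don't have. Let $x$ be a leaf realising the diameter, let $z$ be a central vertex of the nerve, and let $\l$ be a leaf with $d(\l,z)=r(\rho)$. Then one can always find $h\in G$ such that \emph{both} $x$ and $\l$ are boundary points of $\X(\{\rho(h)\})$: take $g_1$ witnessing $x$ as a boundary point of $\X(\rho)$, take $g_2$ witnessing $\l$ as a boundary point, and observe that either $g_1$ or $g_2$ already works for both, or else $g_1 g_2$ does. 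With $x$ a boundary point of $\X(\{\rho(h)\})$ realising the diameter, the matrix of $h$ in the $x$-basis has lower-left entry divisible by $\pi^{d(\rho)}$; transporting the basis to $\Lambda_z$ redistributes this so that both off-diagonal entries have $\pi$-valuation $\ge (d(\rho)-1)/2 > r(\rho)$, whence $P_h(t)\equiv(t-a_h)(t-d_h)\pmod{\pi^{d(\rho)}}$; and $\l$ being a boundary point forces $v_\pi(a_h-d_h)=r(\rho)$ exactly. Since $r(\rho)<d(\rho)/2$, Hensel (\Cref{lem_Hensel}) applies. This two-boundary-point construction is precisely what resolves the ``main obstacle'' you flag in your last paragraph, and without it your argument does not close.

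There are also two smaller flaws. First, your initial choice of $g$ — some element not acting as a scalar on $\Lambda_x/\pi^{r(\rho)+1}\Lambda_x$ for $x$ on the nerve — only guarantees the \emph{diagonal} behaviour you want (a non-trivial diagonal difference modulo $\pi^{r(\rho)+1}$); it gives no control on the off-diagonal entries, and $x$ need not be a boundary point of $\X(\{\rho(g)\})$ (the element $g$ could well be a scalar modulo $\pi$), so the invocation of \Cref{lem_existence_of_basis} there is unjustified. Second, the Hensel appeal as first written is quantitatively wrong: a factorisation $P_{\rho(g)}(t)\equiv(t-\alpha)(t-\beta)\pmod{\pi^{r(\rho)+1}}$ with $v_\pi(\alpha-\beta)=r(\rho)$ fails the hypothesis $v_\pi(\alpha-\beta)<\frac{r(\rho)+1}{2}$ whenever $r(\rho)\ge 1$; you must (as you partly notice) lift the factorisation to modulus at least $\pi^{2r(\rho)+1}$, and it is exactly the two-boundary-point construction above that produces this higher-modulus factorisation.
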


\begin{proof}
    Fix $x,y\in\X(\rho)$ with $d(x,y)=d(\rho)$. Then there exists a basis $(v_1,v_2)$ of $\Lambda_x$ such that $(v_1,\pi^{d(\rho)}v_2)$ is a basis of $\Lambda_y$. With respect to this basis, the action of any $g\in G$ is represented by a matrix of the form
    \[\twomat{a_g}{b_g}{\pi^{d(\rho)}c_g}{d_g}.\]
    Now let $z\in\X(\rho)$ be a vertex in the centre of the nerve of $\X(\rho)$ (if the nerve has even cardinality, let $z$ be the central point closest to $x$). Let $\l\in\X(\rho)$ be a boundary point with $d(\l,z)=r(\rho)$.
    If $\X(\rho)=B(S,r(\rho))$ and $S$ is a line segment of length $a$, then $d(\rho)=a+2r(\rho)$ and $d(x,z)=r+\lfloor a/2\rfloor$.
    Note that if $r(\rho)=0$, then $\l=z$.
     \begin{center}
    \includegraphics[width=12cm]{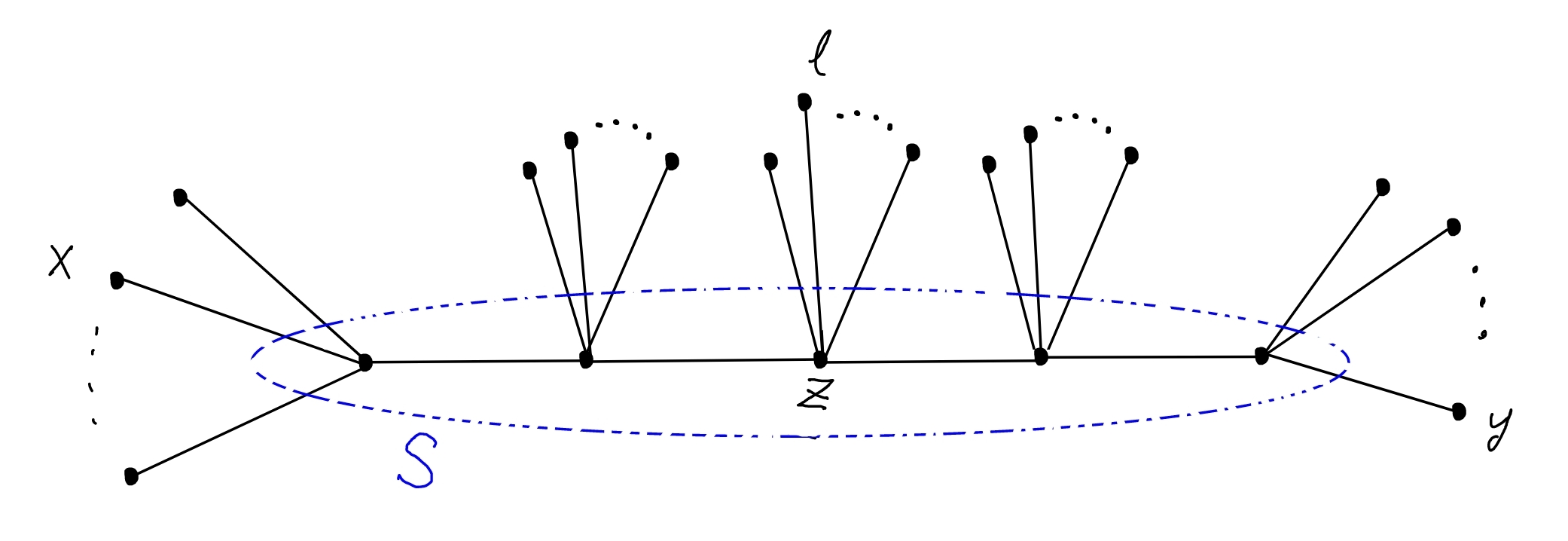}
    \end{center}
    We can choose an element $h\in G$ such that both $x$ and $\l$ are boundary points of $\X(\{h\})$. Indeed, let $g_1\in G$ be such that $x$ is a boundary point of $\X(\{\rho(g_1)\})$ and let $g_2\in G$ be such that $\l$ is a boundary point of $\X(\{\rho(g_2)\})$. Then either $\l$ is also a boundary point of $\X(\{\rho(g_1)\})$, $x$ is a boundary point of $\X(\{\rho(g_2)\})$ or both $x$ and $\l$ are boundary points of $\X(\{\rho(g_1g_2)\})$. Indeed, if $x$ is not a boundary point of $\X(\{\rho(g_2)\})$, then, by \Cref{defn_boundary-points}, all its neighbours are fixed by $\rho(g_2)$. Since $x$ is a boundary point of $\X(\{\rho(g_1)\})$ by definition, there exists a neighbour $x'$ of $x$ such that $\rho(g_1)\Lambda_{x'} \ne \Lambda_{x'}$, where $\Lambda_{x'}$ is any representative of $x'$. But then $\rho(g_1g_2)\Lambda_{x'} = \rho(g_1)\Lambda_{x'} \ne \Lambda_{x'}$, so $x$ is not a boundary point of $\X(\{\rho(g_1g_2)\})$. The argument for $\l$ is similar. 
    
    The vectors $(v_1,\pi^{d(x,z)}v_2)$ form a basis of $\Lambda_z$, with respect to which, the action of $h$ is represented by the matrix
    \[\twomat{a_h}{\pi^{d(x,z)}b_h}{\pi^{d(\rho)-d(x,z)}c_h}{d_h}.\]
    Note that, by the definition of $z$, $r(\rho)< \frac{d(\rho)-1}2\le  d(x,z)$ and that $r(\rho)<d(\rho)-d(x,z)$. 
    Therefore, modulo $\pi^{r(\rho)+1}$, $h$ is represented by the matrix
    \[\twomat{a_h}{0}{0}{d_h}\pmod{\pi^{r(\rho)+1}}.\]
    Now, $\X(\rho)$ contains the ball of radius $r(\rho)$ around $z$, so $v_\pi(d_h - a_h) \ge r(\rho)$. On the other hand, by the way we chose $h$, $B(z,r(\rho)+1)\not\sub \X(\{h\})$, so, by \Cref{lem:ball-implies-scalar}, $h$ does not act as a scalar on $\Lambda_z/\pi^{r(\rho) + 1}\Lambda_z$. Hence, $v_\pi(d_h - a_h) = r(\rho)$. By \Cref{lem_Hensel} we see that $h$ is a thin element for $\rho$.
\end{proof}

\begin{proof}[Proof of Theorems $\ref{thm:classification1}$ and $\ref{thm_classification}$]
    By \Cref{thm:main-intro}, we always have $d(\rho) = n(\rho)$. Note that \Cref{thm_classification} implies \Cref{thm:classification1}, so we only need to prove the former theorem.
    
    If $d(\rho) = 2r(\rho)$, then, by \Cref{prop_k(G)_and_2r(G)}
    \[n(\rho) = d(\rho) = 2r(\rho) \le k(\rho) \le n(\rho)\]
    so $k(\rho) = n(\rho)$. Hence, by \Cref{prop_k(G)_geq_2m(G)}, $m(\rho) = n(\rho)$.
    
    If $d(\rho)>2r(\rho) + 1$, then, by \Cref{prop_thin_element}, $G$ contains a thin element, so by \Cref{prop_k(G)_and_2r(G)},
    \[k(\rho) = 2r(\rho) < d(\rho) = n(\rho),\]
    so $k(\rho) \ne n(\rho)$. Hence, by \Cref{prop_k(G)_geq_2m(G)}, $k(\rho) = 2m(\rho)$. So $m(\rho) = r(\rho)$.
    
    Finally, suppose that $d(\rho)=2r(\rho) + 1$. If $G$ contains a thin element, then, by \Cref{prop_k(G)_and_2r(G)}, $k(\rho) = 2r(\rho) = d(\rho) -1 < n(\rho)$. Thus, by \Cref{prop_k(G)_geq_2m(G)}, $k(\rho) = 2m(\rho)$, so $m(\rho) = r(\rho)$.
    
    Now suppose that $G$ does not contain a thin element. Suppose for contradiction that $m(\rho) \ne n(\rho)$. Then, by \Cref{prop_k(G)_geq_2m(G)}, $2m(\rho)<n(\rho)$. Thus, there exists an element $g\in G$ such that the characteristic polynomial of $\rho(g)$ factors as 
    \[P_{\rho(g)}(t)\equiv (t-\alpha) (t-\beta)\pmod{\pi^{n(\rho)}},\]
    with $\alpha,\beta\in\O{K}/\pi^{n(\rho)}\O{K}$ satisfying $v_\pi(\beta-\alpha)=m(\rho)<n(\rho)/2$.
    Thus, by \Cref{lem_Hensel}, $g$ is a thin element in $G$, a contradiction.
\end{proof}

\vspace{-10pt}
\section*{Acknowledgements}

We'd like to thank Ehud de Shalit for introducing us to this area of study, and for his advice and feedback throughout this project. We are grateful to Ken Ribet for pointing out to us the relevance of the methods of \cite{KatzAbelian1980} to our work.
We are also grateful to Ga\"etan Chenevier, John Cullinan and Alex Lubotsky for helpful conversations. We'd like to thank the referees for their detailed comments and corrections, which have greatly improved the quality of this paper.
The second author was supported by an Emily Erskine Endowment Fund postdoctoral fellowship at the Hebrew University of Jerusalem, by the Israel
Science Foundation (grant No. 1963/20) and by the US-Israel Binational Science Foundation (grant No. 2018250).

\vspace{-5pt}
\bibliography{bibliography}
\bibliographystyle{alpha}
\end{document}